\definecolor{darkblue}{rgb}{0.0, 0.0, 0.55}
\renewcommand{\subset}{\subseteq}
\renewcommand{\emptyset}{\varnothing}
\newtheorem{theorem}{Theorem}[section]
\newtheorem{lemma}[theorem]{Lemma}
\newtheorem*{lemma*}{Lemma}
\def\beq{\begin{equation}}
\def\eeq{\end{equation}}
\def\lb{\label}
\numberwithin{equation}{section}
\def\beq{\begin{equation}}
\def\eeq{\end{equation}}
\def\bbR{ {\mathbb R}}
\def\bbD{ {\mathbb D}}
\def\bbC{ {\mathbb C}}
\def\calT{ {\mathcal T}}
\def\calS{ {\mathcal S}}
\def\tht{\theta}
\def\eps{\varepsilon}
\def\beq{\begin{equation}}
\def\eeq{\end{equation}}
\begin{document}

\title[Positive Vorticity Solutions to Euler Equations]
{Uniqueness of Positive Vorticity Solutions \\ to the 2D Euler Equations on Singular Domains}

\author{Zonglin Han and Andrej Zlato\v{s}}

\address{\noindent Department of Mathematics \\ University of
California San Diego \\ La Jolla, CA 92093 \newline Email: \tt
zlatos@ucsd.edu,
zoh003@ucsd.edu}



\begin{abstract} 
We show that  particle trajectories for positive vorticity solutions to the 2D Euler equations on fairly general bounded simply connected domains cannot reach the  boundary in finite time.  This includes domains with possibly nowhere $C^1$ boundaries and having corners with arbitrary angles, and can fail without the sign hypothesis when the domain has large angle corners.  Hence positive vorticity solutions on such domains are Lagrangian, and we also obtain their uniqueness if the vorticity is initially constant near the boundary.
\end{abstract}

\maketitle

\section{Introduction and Main Results}


In this paper we study the Euler equations 
\begin{align} 
 \partial_t u + (u\cdot \nabla) \,u &= -\nabla p,  \label{1.1}
 \\  \nabla \cdot u & = 0  \label{1.2}
\end{align}
on simply connected bounded  open domains $\Omega\subseteq\bbR^2$ with singular boundaries and at times $t>0$, with $u$ the fluid velocity and $p$ its pressure.  These PDE model the motion of two-dimensional ideal fluids and it is standard to assume the {\it no-flow} (or {\it slip}) boundary condition 
 \begin{equation} \label{1.3}
u\cdot n = 0
 \end{equation}
on $(0,\infty)\times\partial\Omega$ (pointwise when $\partial\Omega$ is $C^1$), with $n$ being the unit outer normal to $\Omega$.  These PDE can be equivalently reformulated as the active scalar equation
\begin{equation}\label{1.4}
\partial_t  \omega + u\cdot \nabla \omega = 0 
\end{equation}
on $(0,\infty)\times\Omega$, with
\[
 \omega:=\nabla\times u= \partial_{x_1} u_{2} - \partial_{x_2} u_{1}
 \]
being the {\it vorticity} of $u$.  This of course means that the velocity  in  \eqref{1.4} is uniquely determined from the vorticity via $u=\nabla^\perp\Delta^{-1}\omega$, and we can then call $\omega$ a solution rather than $u$. 
 
A natural class of solutions are those with bounded $\omega$ \cite{Yudth}, and we provide the definition of weak solutions from this {\it Yudovich class} at the start of the next section.  We will only consider such solutions on the time interval $(0,\infty)$, when they are called {\it global weak solutions}, because they exist for all initial values $\omega_0\in L^\infty(\Omega)$ on very general domains $\Omega$ \cite{GV-Lac} (nevertheless, our results equally apply to solutions on finite time intervals $(0,T)$).  

 It is well known that  the velocity $u$ is spatially log-Lipschitz on each compact $K\subseteq\Omega$ when $\omega$ is bounded (uniformly in time, see \eqref{1.6} below).  Hence for each $x\in\Omega$ there is a unique solution to the ODE 
 \beq \label{1.5}
\frac d{dt} X_t^x = u(t,X_t^x) \qquad\text{and}  \qquad X_0^x=x
\eeq
on an interval $(0,t_x)$ such that 
\[
t_x:=\sup\{t>0\,|\,X_s^x\in\Omega \text{ for all $s\in(0,t)$}\}
\]
 (so if $X_t^x$ reaches $\partial\Omega$, then $t_x$ is the first such time).  That is, $\{X_t^x\}_{t\in[0,t_x)}$ is the Euler particle trajectory for the particle starting at $x\in\Omega$.  We note that while a priori the ODE only holds for almost all $t\in(0,t_x)$ (with $X_t^x$ being continuous in time), $u$ can be shown to be continuous when $\omega$ is bounded, so that \eqref{1.5} in fact holds for each $t\in[0,t_x)$ (see Subsection \ref{SS2.1} below).
Since \eqref{1.4} is a transport equation, it is then natural to ask whether general weak solutions are transported by $u$ in the sense that  $\omega(t,X_t^x)=\omega_0(x)$ for a.e.~$t\in(0,\infty)$ and a.e.~$x\in \Omega$ such that $t_x>t$.  This is indeed the case \cite[Lemma 3.1]{HanZla}, but that does not a priori exclude the possibility of vorticity creation and depletion on $\partial\Omega$ unless $t_x=\infty$ for a.e.~$x\in\Omega$ (then $\nabla\cdot u\equiv 0$ shows that $|\Omega\setminus \{X_t^x\,|\,x\in\Omega\text{ and }t_x>t\}|=0$).  If both these properties hold, so that $\omega(t,\cdot)$ is the push-forward of $\omega_0$ via $X_t^x$ for each $t\in(0,\infty)$, we call such $\omega$ a {\it Lagrangian solution}.  It is currently an open question whether non-Lagrangian solutions can exist on (sufficiently singular) two-dimensional domains.

Existence of non-Lagrangian solutions would imply non-uniqueness of weak solutions.  But even if all weak solutions are Lagrangian on some domain, this does not immediately yield their uniqueness. In fact,  while weak solutions are known to be unique on rectangles \cite{BDT} and on domains that are $C^{1,1}$ except at finitely many corners that are all exact acute angle sectors  \cite{DT,LMW}, this remains an open question on more singular domains.  The main issue is that the velocity typically is not  log-Lipschitz near corners with angles greater than $\frac\pi 2$, which removes a crucial ingredient from the proof of uniqueness.  However, one can sometimes obtain a partial result via \cite[Proposition 3.2]{LacZla}, which shows that a Lagrangian solution remains unique as long as it remains constant near the singular portion of $\partial\Omega$.  In particular, if $\omega_0$ is constant near $\partial\Omega$ and each Euler particle trajectory associated with a corresponding solution $\omega$ can be shown to never reach $\partial\Omega$ (in which case ${\rm dist}(\{X_t^x\,|\,x\in K\},\partial\Omega)>0$ for any compact $K\subseteq \Omega$ and any $t>0$), then $\omega$ is the unique Lagrangian solution with initial value $\omega_0$.  And if all weak solutions within some class are proved to be Lagrangian, this will yield uniqueness of $\omega$ within that class.  (We note that uniqueness was also proved on domains smooth except at a single obtuse angle corner $Y$ and for certain special $\omega_0\ge 0$ such that $Y\in {\rm supp} \, \omega(t,\cdot)$ can only hold for $t=0$ \cite{AgrNah}.)

The above approach was successfully used by Lacave for solutions $\omega\ge 0$ on domains that are $C^{1,1}$ except at finitely many corners that are all exact sectors with angles $>\frac\pi 2$ \cite{Lacave-SIAM}, and later without the sign restriction on $\omega$ by Lacave and the second author  on domains that are $C^{1,1}$ except at finitely many corners with angles in $(0,\pi)$ \cite{LacZla}, as well as by both authors on domains with much lower boundary regularity  \cite{HanZla} (including infinitely many corners with angles in $(0,\pi)$).  The latter paper in fact contains a sharp criterion for the geometry of $\partial\Omega$ that guarantees that no weak solution has a particle trajectory that reaches $\partial\Omega$ in finite time. This criterion is slightly stronger than exclusion of corners with angles $>\pi$ (in particular, it is satisfied by all convex domains), and it was demonstrated in \cite{KisZla,LacZla} that particle trajectories for bounded $\omega$ on domains that do have such corners can reach $\partial\Omega$ in finite time.  

The examples in \cite{KisZla,LacZla} all involve sign-changing solutions, so in view of \cite{Lacave-SIAM} it is natural to ask whether signed solutions can exhibit such singular behavior on more irregular domains than those considered in \cite{Lacave-SIAM}.
The main result of the present paper is a negative answer to this question
on much more general domains, allowing both infinitely many corners without size or shape restrictions and considerably less boundary smoothness in-between them.  In particular, we show that positive (and then obviously also negative)  weak solutions on such domains are Lagrangian, with particle trajectories approaching $\partial\Omega$ no faster than double-exponentially, and that these solutions are also unique when $\omega_0$ is  constant near $\partial\Omega$.

Let $\Omega\subseteq\bbR^2$ be a bounded open Lipschitz  domain with $\partial \Omega$ a  Jordan curve, let $L:=|\partial\Omega|$ be the arc-length of $\partial\Omega$, and let $\sigma:[0,L]\to\bbC$ be a counter-clockwise arc-length parametrization of $\partial \Omega$ (so $\sigma(L)=\sigma(0)$).
For any $\tht\in[0,L)$, the {\it unit forward tangent vector} to $\Omega$ at $\sigma(\tht)$ is the unit vector 
\beq \lb{7.4}
\bar\tau(\tht) := \lim_{\phi \rightarrow \tht+} \frac{\sigma(\phi)-\sigma(\tht)}  {|\sigma(\phi)-\sigma(\tht)|} ,
\eeq
provided the limit exists (we also let $\bar\tau(L):=\bar\tau(0)$).  If it does for each $\tht\in[0,L)$, and $\bar\tau$ has one-sided limits everywhere on $[0,L]$, then $\Omega$ is said to be {\it regulated}.  In that case $\bar\tau$ is right-continuous, and  if we identify $\bbR^2$  with $\bbC$ and let $\arg(z)\in (-\pi,\pi]$ for $z\neq 0$, then
\beq\lb{1.5'}
\bar \alpha(\tht):= \arg \left( \frac {\bar\tau(\tht)} {\lim_{\phi \rightarrow \tht-} \bar\tau(\phi)} \right) \in(-\pi,\pi)
\eeq
 for $\tht\in(0,L]$ is such that $\pi-\bar\alpha(\tht)$ is the {\it interior angle} of $\Omega$ at $\sigma(\tht)$.
   Note that $\bar\alpha(0)$ is not defined, and $\bar\alpha(\tht)\in (-\pi,\pi)$ for $\tht\in(0,L]$ because $\Omega$ is Lipschitz.  

So corners of $\Omega$ are precisely the points $\sigma(\tht)$ with $\tht\in(0,L]$ and  $\bar\alpha(\tht)\neq 0$, and regulated domains clearly have countably many of them.  
If also $\sum_{\tht\in(0,L]} |\bar\alpha(\tht)|<\infty$, then 
\beq \lb{7.3}
\bar \beta_c(\tht) := \bar \arg \left( \bar\tau(\tht) \right) - \sum_{\tht'\le \tht}  \bar\alpha(\tht') 
\eeq
is a continuous function on $[0,L]$ provided we let $\bar \arg(\bar\tau(\tht))$ be the argument of $\bar\tau(\tht)$ plus an appropriate $\tht$-dependent integer multiple of $2\pi$.   We will also assume that $\bar \beta_c$ is {\it Dini continuous} on $[0,L]$,  that is, it has a modulus of continuity $m:[0,L]\to[0,\infty)$ with
 $ \int_{0}^{L} \frac{m(r)}{r}  dr < \infty$ (i.e., $|\bar\beta_c(\tht)-\bar\beta_c(\tht')|\le m(|\tht-\tht'|)$ holds for all $\tht,\tht'\in[0,L]$).  We recall that any H\" older modulus of continuity is also a Dini modulus.
 We can now state our main result.
 
 \begin{theorem} \lb{T.1.1}
Assume that a bounded open Lipschitz domain $\Omega\subseteq\bbR^2$ with $\partial\Omega$ a Jordan curve is regulated.  Let $\bar\tau$ be the forward tangent vector to $\Omega$ from \eqref{7.4},  let $\bar\alpha$ be from \eqref{1.5'}, and assume that $\sum_{\tht\in(0,L]} | \bar\alpha(\tht)| <\infty$ and  $\bar\beta_c$ from \eqref{7.3} is Dini continuous.    
Consider any $0\le \omega_0 \in L^\infty(\Omega)$ and let $\omega\ge 0$ 
 from the Yudovich class 
  be any global weak solution to the Euler equations on $\Omega$ with initial value $\omega_0$
(such $\omega$ is known to exist by \cite{GV-Lac}).

(i)
We have $t_x=\infty$ for all $x\in\Omega$ and $\{X_t^x\,|\,x\in\Omega\}=\Omega$ for all $t>0$, and there is a constant $C_\omega<\infty$ such that for any $\eps>0$ and all large enough $t> 0$, 
\beq \lb{7.22}
\sup_{{\rm dist}(x,\partial\Omega)\ge \eps} {\rm dist}(X_t^x,\partial\Omega)\ge  \exp({-e^{C_\omega  t}})
\eeq
(except when $\omega\equiv 0$, but then $X_t^x\equiv x$).  Moreover,   $\omega(t,X_t^x)=\omega_0(x)$ for \hbox{a.e.~$(t,x)\in (0,\infty)\times\Omega$} (i.e., $\omega$ is Lagrangian), and  $u$ is continuous on $[0,\infty) \times\Omega$ and \eqref{1.5} holds pointwise.

(ii)
If ${\rm supp}\,( \omega_0-a)\cap\partial\Omega = \emptyset$ for some $a\ge 0$, then $\omega$  is the unique non-negative weak solution with initial value $\omega_0$.
\end{theorem}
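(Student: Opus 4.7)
The plan is to transport the problem to the unit disk $\bbD$ via a Riemann map $T:\Omega\to\bbD$ and analyze the resulting dynamics there. The Dini continuity of $\bar\beta_c$ together with $\sum_{\theta\in(0,L]}|\bar\alpha(\theta)|<\infty$ is precisely the classical Warschawski condition, which yields quantitative control on the boundary extension of $T$, $T^{-1}$, and the conformal factor $|T'|$ in regions in-between corners. Setting $\tilde\psi:=\psi\circ T^{-1}$ where $\psi$ is the Euler stream function, conformal invariance of the Laplacian gives $-\Delta\tilde\psi=\tilde\omega$ on $\bbD$ with zero Dirichlet data, where $\tilde\omega:=|DT^{-1}|^2\,\omega\circ T^{-1}\ge 0$ and $\|\tilde\omega\|_{L^1(\bbD)}=\|\omega\|_{L^1(\Omega)}<\infty$. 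The Euler trajectories $X_t^x$ correspond under $T$ to trajectories in $\bbD$ driven (up to a positive conformal time-reparametrization) by $\tilde u:=\nabla^\perp\tilde\psi$, and the question becomes whether these image curves can reach $\partial\bbD$ in finite time.

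The heart of the argument is a bound on the normal component of $u$ near $\partial\Omega$. Working in the disk, I would exploit that $\tilde\omega\ge 0$ implies (via the maximum principle) $\tilde\psi\ge 0$ on $\bbD$, vanishing on $\partial\bbD$. Writing $\tilde\psi(re^{i\theta})=(1-r)v(r,\theta)$ and bounding $v$ and its tangential derivative through a Poisson-integral representation of the boundary normal derivative $\partial_n\tilde\psi|_{\partial\bbD}$ (an integral of $\tilde\omega$ against the positive Poisson kernel, which therefore does not lose to oscillations thanks to $\tilde\omega\ge 0$), one obtains an estimate on the radial component of $\tilde u$ of order $\delta\log(1/\delta)$ at distance $\delta$ from $\partial\bbD$. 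Transferring this back via the Warschawski-controlled conformal factor yields
\[
|u(x)\cdot n_x|\le C_\omega\, d(x)\log(1/d(x))\qquad\text{for $x$ near $\partial\Omega$,}
\]
where $d(x):={\rm dist}(x,\partial\Omega)$ and $n_x$ is the unit vector from $x$ to its nearest boundary point.

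This estimate gives the differential inequality $\tfrac{d}{dt}d(X_t^x)\ge -C_\omega\,d(X_t^x)\log(1/d(X_t^x))$, which integrates to the double-exponential lower bound \eqref{7.22} and in particular yields $t_x=\infty$ for every $x\in\Omega$. The Lagrangian identity $\omega(t,X_t^x)=\omega_0(x)$ a.e. then follows from \cite[Lemma~3.1]{HanZla}; continuity of $u$ on $[0,\infty)\times\Omega$ and pointwise validity of \eqref{1.5} follow by combining the above boundary velocity bound with the standard log-Lipschitz regularity on compact subsets; and bijectivity of the time-$t$ flow follows from $\nabla\cdot u\equiv 0$ (area preservation) together with openness of $\Omega$. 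For part (ii), the support hypothesis gives $\omega_0\equiv a$ outside a compact $K\subset\Omega$; by (i), for each $T>0$ the image $\{X_t^x:x\in K,\ t\in[0,T]\}$ is contained in a compact $K'\subset\Omega$, so every non-negative weak solution with initial data $\omega_0$ is equal to $a$ in a fixed neighborhood of $\partial\Omega$ on $[0,T]$, and uniqueness among Lagrangian solutions follows from \cite[Proposition~3.2]{LacZla}.

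The main obstacle is the boundary velocity estimate. The difficulty is that the push-forward $\tilde\omega$ is only in $L^1(\bbD)$ and may concentrate at the images of acute corners of $\partial\Omega$, while the pull-back conformal factor $|T'|$ may itself blow up near obtuse corners, so neither $L^\infty$-based log-Lipschitz estimates for $\tilde u$ in the disk nor pointwise velocity bounds for $u$ on $\Omega$ are directly available. The sign condition $\omega\ge 0$ is used essentially — without it the corresponding estimate fails near obtuse corners, consistent with the finite-time boundary-attainment examples of \cite{KisZla,LacZla} — and the Warschawski boundary-regularity of $T$ must be invoked carefully to preserve the correct $d\log(1/d)$ order of decay for $u\cdot n_x$ all the way up to corners of arbitrary interior angle.
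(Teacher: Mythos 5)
The central step of your argument --- the pointwise bound $|u(x)\cdot n_x|\le C_\omega\, d(x)\log(1/d(x))$ near $\partial\Omega$, with $n_x$ the direction from $x$ to its nearest boundary point --- is false on the domains covered by the theorem, and this breaks the whole strategy rather than just a technical step. Near a corner of interior angle $\nu\pi$ the stream function behaves to leading order like a positive multiple of ${\rm Im}\,\big((z-z_0)^{1/\nu}\big)$ (suitably rotated), so $|u|=|\nabla\Psi|\sim |x-z_0|^{1/\nu-1}$ there. Computing the derivative of $\Psi$ along the boundary-tangential direction at a point at distance $\sim r$ from the vertex gives $|u\cdot n_x|\sim r^{1/\nu-1}$ off the bisector: for reflex corners ($\nu>1$, allowed by the theorem and explicitly discussed in Remark 1) this \emph{blows up}, and even for acute corners with $\nu\in(\tfrac12,1)$ it is $d(x)^{1/\nu-1}$ with exponent in $(0,1)$, far larger than $d\log(1/d)$. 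Consequently the differential inequality $\tfrac{d}{dt}d(X_t^x)\ge -C\,d\log(1/d)$ fails; worse, naively integrating the true bound $\tfrac{d}{dt}d\ge -C d^{1/\nu-1}$ would predict boundary collision in finite time. Trajectories survive not because the normal velocity is small but because the particle swings around the corner, and the distance function cannot see this. The quantity that does is the stream function: since $u\cdot\nabla\Psi\equiv0$, one has $\tfrac{d}{dt}\Psi(t,X_t^x)=\partial_t\Psi(t,X_t^x)$, so the dangerous convective term drops out entirely. This is exactly why the paper takes $\Psi$ (not $d(x)$, and not $1-|\calT(x)|$ as in \cite{HanZla}) as the Lyapunov functional, proves the two-sided bounds of Lemmas \ref{L.3.3} and \ref{L.3.4} (the lower bound is where $\omega\ge0$ enters essentially), and controls $\partial_t\Psi$ via the integral formula of Lemma \ref{L.3.6} together with Lemma \ref{L.3.5}, arriving at $|\tfrac{d}{dt}\Psi|\le C\|\omega\|_{L^\infty}\Psi|\ln\Psi|$ as in \eqref{7.1}; the double-exponential bound \eqref{7.22} then follows from this log-Gronwall inequality and the comparison $\Psi(t,\calS(\xi))\gtrsim 1-|\xi|$ of Lemma \ref{L.3.4}.

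Several structural elements of your proposal are on target: the Riemann-map transfer, the essential role of the sign condition, the log-Gronwall mechanism producing a double exponential, and the reduction of part (ii) to \cite[Proposition 3.2]{LacZla}. But even after replacing $d(x)$ by $\Psi$ you would still face two issues your sketch does not address: (a) one must justify differentiating $\Psi$ along trajectories in time, i.e., prove $\Psi\in C^1$ in space-time and derive the formula \eqref{7.13} for $\partial_t\Psi$ directly from the weak formulation (this is the content of Lemma \ref{L.3.6}, proved with carefully chosen test functions); and (b) the lower bound of Lemma \ref{L.3.4} is only quantitatively useful while $\|\omega(t,\cdot)\|_{L^1}$ stays bounded below by a fixed multiple of $\|\omega(t,\cdot)\|_{L^\infty}$, which is not known a priori before the solution is shown to be Lagrangian; the paper resolves this circularity by a bootstrap over time intervals of fixed length $T_\eps$.
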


{\it Remarks.}  
1.  Hence the well-known double-exponential bound on the rate of approach of particle trajectories to the boundaries of smooth domains (going back to \cite{Hold, Wol}) still holds on the domains considered here, even though $u$ can be far from log-Lipschitz near $\partial\Omega$ and even unbounded at corners with angles $>\pi$.  A partial explanation is that $\omega\ge 0$ forces $u$ to ``circulate'' around $\partial\Omega$ counter-clockwise, thus keeping any particle trajectory near any corner for only a short time during each passage through its neighborhood.  However, our domains can even have everywhere singular boundaries (e.g., a dense set of corners), so all of $\partial\Omega$ could be the set of potential trouble spots rather than just a few individual corners.
\smallskip

2. Part (i) of this result suggests a natural open question:  is there any planar domain $\Omega$ and a weak solution $\omega\ge 0$ to the Euler equation on it that has a particle trajectory starting inside $\Omega$ and reaching $\partial\Omega$ in finite time?  Of course, a second one is whether such solutions, if they exist, can fail to be Lagrangian (this is currently open even for unsigned $\omega$).
\smallskip

 
Let us briefly discuss our approach and its relation to \cite{Lacave-SIAM, LacZla, HanZla}.  In all four papers, the central ingredient is a non-negative Lyapunov functional on $(0,\infty)\times \overline\Omega$ that vanishes only on $(0,\infty)\times \partial\Omega$ and its change on Euler particle trajectories can be controlled sufficiently well to show that it can never become 0 unless it is 0 initially.  Lacave first chose this functional to be the stream function $\Psi:=-\Delta^{-1}\omega$ of the fluid velocity $u$ \cite{Lacave-SIAM} because its rate of change in the flow direction $u$ is 0.  When $\omega$ does not have a sign, then $\Psi$ can vanish inside $\Omega$, and \cite{LacZla, HanZla} therefore  used instead the time-independent function $1-|\calT(x)|$, with $\calT:\Omega\to\bbD$ a Riemann mapping.  In the present paper we consider  again solutions $\omega\ge 0$, and  so revisit the idea of using the stream function.  However,  in Lemmas \ref{L.3.1}--\ref{L.3.5} we obtain sharper and more general estimates on $\Psi$ and $\partial_t\Psi$ than \cite{Lacave-SIAM}, which allows us to include much more general domains, with arbitrary corners as well as considerably less regular boundaries overall.

In the next section we state these estimates and use them to prove Theorem \ref{T.1.1}, leaving the proofs of the estimates and of a formula for $\partial_t\Psi$ for the last two sections. 

%
%
%

{\bf Acknowledgements.}
AZ acknowledges  partial support by  NSF grant DMS-1900943 and by a Simons Fellowship.

\section{Proof of Theorem \ref{T.1.1}}

We complete the proof in three steps.  We always assume that $\Omega$ satisfies the hypotheses from Theorem \ref{T.1.1}, and $(\omega,u)$ is a weak solution to \eqref{1.1}--\eqref{1.3} on $(0,\infty)\times \Omega$, as defined next.

\subsection{Weak solutions and space-time differentiability of the stream function} \lb{SS2.1}

We consider here weak solutions to \eqref{1.1}--\eqref{1.3} from the {\it Yudovich class}
\[
\left\{ (\omega,u)\in L^\infty \left( (0,\infty);L^\infty(\Omega)\times L^2(\Omega) \right) \big|\,\, \text{$\omega= \nabla \times  u$ and \eqref{1.2}--\eqref{1.3} all hold weakly} \right\},
\]
where the weak form of \eqref{1.2}--\eqref{1.3} is 
\[
\int_\Omega u(t,x)\cdot\nabla h(x) \, dx = 0 
\qquad \forall h\in H^1_{\rm loc}(\Omega) \text{ with $\nabla h\in L^2(\Omega)$}
\]
for a.e.~$t\in(0,\infty)$ (see \cite{GV-Lac, GV-Lac2}).  It is well-known that $\nabla\times u\in L^\infty ((0,\infty)\times\Omega)$ implies that $u$ is  bounded and log-Lipschitz on any compact $K\subseteq\Omega$ at a.e.~time $t\in (0,\infty)$ (and uniformly in these times), after possibly redefining it on a measure zero spatial set for each such $t$.  If we also redefine $u$ at the exceptional measure-zero set of times (and also at $t=0$), then for any compact $K\subseteq\Omega$ we will have
\beq \lb{1.6}
\sup_{t\ge0}\sup_{x,y\in K} \left( |u(t,x)|+\frac {|u(t,x)-u(t,y)|}{|x-y| \max \{1,-\ln|x-y|\} } \right)<\infty
\eeq
(this is also shown in the proof of Lemma \ref{L.3.6} below).
Let  now $X_t^{s,x}$ for $(s,x)\in[0,\infty)\times\Omega$ be the unique continuous function satisfying
\beq \label{1.7}
\frac d{dt} X_t^{s,x} = u(t,X_t^{s,x}) \qquad\text{and}  \qquad X_s^{s,x}=x
\eeq
a.e.~on the maximal interval $I^{s,x}\subseteq[0,\infty)$ (containing $s$) such that $X_t^{s,x}\in\Omega$ for all $t\in I^{s,x}\setminus\partial I^{s,x}$.  That is, $I^{s,x}$ is the (backward and forward) life-span of the particle trajectory $X_t^{s,x}$.  Of course, $X^{0,x}_t=X^x_t$ and $I^{0,x}=[0,t_x]$ (or $[0,\infty)$ if $t_x=\infty$) for all $x\in\Omega$.

We say that $(\omega,u)$ from the Yudovich class is a {\it weak solution} to \eqref{1.1}--\eqref{1.3} on $(0,T)\times \Omega$ (for some $T\in(0,\infty]$)  with some initial condition $\omega_0\in L^\infty(\Omega)$, if
\beq\label{1.222}
\int_0^{T} \int_\Omega \omega \left(  \partial_t \varphi + u \cdot \nabla \varphi \right) \,dx dt= -\int_\Omega \omega_0 (x)\varphi(0, x) \,dx \qquad \forall \, \varphi \in C_c^\infty \left([0, T) \times \Omega\right).
\eeq
This is in fact the definition of a weak solution $\omega$ to the transport equation \eqref{1.4}  when $u$ is some given vector field, but it is also equivalent to the relevant weak velocity formulation of the Euler equations on $\Omega$ (see \cite[Remark 1.2]{GV-Lac2}).  When $T=\infty$, we call such solutions {\it global}.  Existence of a global weak solution is guaranteed by \cite{GV-Lac}  for any $\omega_0\in L^\infty(\Omega)$ on very general domains (while uniqueness is still open on most singular domains), and so for the sake of notational simplicity we will always assume  that $T=\infty$.

Lemma 3.1 in \cite{HanZla} now shows that for a.e.~$t\in(0,\infty)$, a weak solution $(\omega,u)$ satisfies  $\omega(t,X_t^{x})=\omega_0(x)$ for a.e.~$x\in\Omega$ such that $t_x<t$.  We can therefore redefine $\omega$ on a set of measure 0 so that $\omega(t,X_t^{x})=\omega_0(x)$ holds for all $x\in\Omega$ and all $t\in(0,t_x)$.  Let now $s_1\in(0,\infty)$ be any Lebesgue point of $\omega$ as a function from $(0,\infty)$ to $L^1(\Omega)$.
Replacing $\varphi$ in \eqref{1.222} by $\varphi\psi_\eps$, where $\psi_\eps\in C^\infty ([0, \infty))$ satisfies $\chi_{[s_1,\infty)}\le\psi_\eps\le \chi_{[s_1-\eps,\infty)}$, and taking $\eps\to 0$ shows that $(\omega,u)$ is also a weak solution to \eqref{1.1}--\eqref{1.3} on $(s_1,\infty)\times \Omega$ with initial condition $\omega(s_1,\cdot)$ (i.e., \eqref{1.222} holds with $(0,\omega_0)$ replaced by $(s_1,\omega(s_1,\cdot)$).  Doing the same with any $\varphi \in C_c^\infty \left((0, \infty) \times \Omega\right)$ and $\chi_{[0,s_1]}\le\psi_\eps\le \chi_{[0,s_1+\eps]}$ shows that $(\omega,u)$ is also a weak solution to \eqref{1.1}--\eqref{1.3} on $(0,s_1)\times \Omega$ with terminal condition $\omega(s_1,\cdot)$ (which becomes an initial condition if we reverse the direction of time and replace $u$ by $-u$).  This and Lemma 3.1 in \cite{HanZla} show that we can redefine $\omega$ on a set of measure 0 so that $\omega(t,X_t^{s_1,x})=\omega(s_1,x)$ holds for all $x\in\Omega$ and all $t\in I^{s_1,x}\setminus\partial I^{s_1,x}$ (clearly the values on the curve $(t,X_t^{s_1,x})$ will not change for any $x$ such that $0\in I^{s_1,x}$).  We can continue this way, with $s_2,s_3,\dots$ consecutively in place of $s_1$, where $\{s_j\}_{j\ge 1}$ is dense in $(0,\infty)$.  This allows us to change $\omega$ on a measure zero set so that for all $s\in[0,\infty)$ (and with $\omega(0,\cdot):=\omega_0$) we will from now have
\beq\label{7.8}
\omega \left(t,X_t^{s,x} \right)=\omega(s,x) \qquad \forall x\in\Omega\text{ and } t\in I^{s,x}\setminus\partial I^{s,x}.
\eeq

It is well known that since $\Omega$ is simply connected, $\omega$ from any weak solution $(\omega,u)$ uniquely defines the velocity $u$ via its  {\it stream function}
\[
\Psi(t,\cdot) := -\Delta^{-1} \omega(t,\cdot) 
\]
for all $t\ge 0$ (the negative sign is chosen so that $\Psi\ge 0$ when $\omega\ge 0$).  Namely, after redefinition of $u$ on a measure zero set we have $u = -\nabla^\perp\Psi$, where  $(v_1,v_2)^\perp:=(-v_2,v_1)$ and $\nabla^\perp:=(-\partial_{x_2},\partial_{x_1})$.  We can now use \eqref{7.8} to show that $\Psi$ is space-time differentiable (we postpone the proof of this to the last section).

\begin{lemma} \label{L.3.6}
We have $\Psi\in C^1([0,\infty)\times K)$ for each compact $K\subseteq\Omega$, and $\nabla\Psi = u^\perp$ and 
\beq\lb{7.13}
\partial_t \Psi(t,x) =  - \frac{1}{2\pi} \int_{\Omega}  \left(\frac{\calT(y)-\calT(x)}{|\calT(y)-\calT(x)|^2}  - \frac{\calT(y)-\calT(x)^*}{|\calT(y)-\calT(x)^*|^2}\right)^T D\calT(y)\, u(t,y)\,  \omega(t,y)  dy
\eeq
for each $(t,x)\in [0,\infty)\times\Omega$, where $\calT:\Omega\to\bbD$ is any Riemann mapping.
\end{lemma}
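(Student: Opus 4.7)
The plan is to represent $\Psi$ via the Dirichlet Green's function of $\Omega$ expressed through the Riemann mapping $\calT$, transfer the time derivative onto $\omega$ via the weak transport equation, and then integrate by parts in space.

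For the spatial part, $-\Delta\Psi(t,\cdot)=\omega(t,\cdot)\in L^\infty(\Omega)$ with $\Psi(t,\cdot)|_{\partial\Omega}=0$, so standard elliptic theory gives $\Psi(t,\cdot)\in W^{2,p}_{\mathrm{loc}}(\Omega)\subset C^{1,\alpha}_{\mathrm{loc}}(\Omega)$ for all $p<\infty$ and $\alpha\in(0,1)$, uniformly in $t$. Since $\Omega$ is simply connected and $u$ is divergence-free with $u\cdot n=0$ weakly on $\partial\Omega$, a standard Hodge argument (applied to $u+\nabla^\perp\Psi$, which is both divergence- and curl-free with vanishing normal component) gives $u=-\nabla^\perp\Psi$, i.e., $\nabla\Psi=u^\perp$, after the Subsection~\ref{SS2.1} redefinition of $u$. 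The classical bound on $\nabla\Psi$ as a logarithmic singular integral against $\omega$ yields both the log-Lipschitz estimate~\eqref{1.6} and joint spatial continuity of $u$.

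For the time derivative I would use
\[
G_\Omega(x,y) = -\frac{1}{2\pi}\ln\left|\frac{\calT(x)-\calT(y)}{1-\overline{\calT(x)}\calT(y)}\right|,
\]
which via $1-\overline{\calT(x)}\calT(y)=\overline{\calT(x)}(\calT(x)^*-\calT(y))$ rewrites as $\tfrac{1}{2\pi}\ln|\calT(y)-\calT(x)^*| - \tfrac{1}{2\pi}\ln|\calT(y)-\calT(x)|$ plus a purely $x$-dependent constant. A direct calculation then gives
\[
\nabla_y G_\Omega(x,y) = \frac{1}{2\pi}\left(\frac{\calT(y)-\calT(x)^*}{|\calT(y)-\calT(x)^*|^2} - \frac{\calT(y)-\calT(x)}{|\calT(y)-\calT(x)|^2}\right)^T D\calT(y).
\]
Formal differentiation of $\Psi(t,x)=\int_\Omega G_\Omega(x,y)\omega(t,y)\,dy$ in $t$, followed by the substitution $\partial_t\omega=-\nabla\cdot(u\omega)$ (valid since $\nabla\cdot u=0$) and integration by parts in $y$ (whose boundary term vanishes, as $G_\Omega(x,\cdot)|_{\partial\Omega}=0$), produces exactly the right-hand side of~\eqref{7.13}.

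To make this rigorous I would invoke the weak identity that for every $\varphi\in W^{1,\infty}(\Omega)$ vanishing on $\partial\Omega$, the map $t\mapsto\int_\Omega\varphi\,\omega(t,\cdot)\,dy$ is Lipschitz with derivative $\int_\Omega u\cdot\nabla\varphi\,\omega\,dy$, obtained from~\eqref{1.222} via a $\psi_\eps$-approximation as in Subsection~\ref{SS2.1}. I would apply this with $\varphi=G_\Omega(x,\cdot)\chi_\eps$, where $\chi_\eps$ is a smooth cutoff vanishing on a disc of radius $\eps$ around $x$, and pass to $\eps\to 0$ using $\nabla_y G_\Omega(x,\cdot)\in L^p_{\mathrm{loc}}$ for every $p<2$ together with $u\omega\in L^\infty$; this yields~\eqref{7.13} pointwise at every $(t,x)\in[0,\infty)\times\Omega$. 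Joint continuity of the right-hand side in $(t,x)\in[0,\infty)\times K$ then follows from dominated convergence, since $\calT(K)$ is compactly contained in $\bbD$ (so $\calT(x)^*$ stays at positive distance from $\calT(\Omega)$) and the $y=x$ singularity is locally integrable; combined with continuity of $\nabla\Psi=u^\perp$, this upgrades $\Psi$ to $C^1([0,\infty)\times K)$. The main technical obstacle is this cutoff/approximation step---establishing~\eqref{7.13} pointwise rather than only a.e. in $(t,x)$---which requires uniform control of $\nabla_y G_\Omega(x,\cdot)$ near its singularity at $y=x$ as $x$ ranges over compact subsets of $\Omega$.
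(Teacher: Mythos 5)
Your overall strategy is the same as the paper's: represent $\Psi$ through the Green's function written via the Riemann mapping, feed a truncated version of $G_\Omega(x_0,\cdot)$ into the weak formulation \eqref{1.222}, and pass to the limit in the truncation; the kernel computation giving \eqref{7.13} is also correct. The gap is in the rigorous step. You invoke a weak identity for test functions $\varphi\in W^{1,\infty}(\Omega)$ vanishing on $\partial\Omega$ and apply it to $\varphi=G_\Omega(x,\cdot)\chi_\eps$, but on the domains this lemma is about that function is \emph{not} in $W^{1,\infty}(\Omega)$: $\nabla_y G_\Omega(x,y)$ carries the factor $D\calT(y)$, which blows up as $y$ approaches any corner with angle greater than $\pi$ (there $\det D\calS\to0$), and $u$ is likewise unbounded at such corners, so your domination ``$u\omega\in L^\infty$'' fails globally. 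Your cutoff $\chi_\eps$ removes only the singularity at $y=x$, which is actually the harmless one ($u,\omega$ are bounded near $x$ and $\nabla_y G_\Omega(x,\cdot)\in L^1_{\mathrm{loc}}$); you identify that as the main obstacle, but the real difficulty is at $\partial\Omega$.

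What is missing is a truncation that also vanishes near $\partial\Omega$, together with the integrability needed to remove it. The paper tests with $g_r(t)\,f_h(\phi(x))$ where $\phi=G_\Omega(x_0,\cdot)$ itself and $f_h$ cuts off both where $\phi$ is large (near $x_0$) and where $\phi$ is small (near $\partial\Omega$, exploiting that the Green's function vanishes there), so the test function is genuinely in $C_c^\infty(\Omega)$; the limit $h\to0$ is then justified by proving $u\cdot\nabla\phi\in L^\infty((0,\infty);L^1(\Omega))$, via $|\nabla\phi|\le C_{x_0}\|D\calT\|\le 2C_{x_0}(\det D\calT)^{1/2}\in L^2(\Omega)$ away from $x_0$ combined with $u\in L^\infty((0,\infty);L^2(\Omega))$. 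Without some version of this step your argument only covers domains on which $D\calT$ and $u$ are bounded up to the boundary, which excludes exactly the singular domains of interest. A secondary point: joint continuity of the right-hand side of \eqref{7.13} in $(t,x)$ (needed for $\Psi\in C^1$) is not plain dominated convergence; it requires continuity of $t\mapsto\omega(t,\cdot)$ in $L^p(\Omega)$ and the resulting continuity of $u$, which the paper establishes explicitly through the function $R$.
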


Note that since this shows that now $u=-\nabla^\perp\Psi$ is continuous on $[0,\infty)\times \Omega$, this version of $u$ still satisfies \eqref{1.6}.  Since $u$ is uniquely determined by $\omega$, from now on we will refer  to $\omega$ as a weak solution to \eqref{1.4} (with $u:=\nabla^\perp \Delta^{-1} \omega$), instead of to $(\omega,u)$ as a weak solution to \eqref{1.1}--\eqref{1.3}.

\subsection{Formulation on the unit disc via  Riemann mapping}

Let next $\calT:\Omega\to\bbD$ be a Riemann mapping as in Lemma \ref{L.3.6},  extended continuously to $\partial\Omega$,  and let $\calS:=\calT^{-1}$.  We will now use $\calT$ to rewrite $u$ and $\partial_t \Psi$ in terms of integrals over $\bbD$.
We have
\beq\lb{2.0}
\Psi(t,x) = -\left[\Delta^{-1} \omega(t,\cdot) \right] (x) =  -\frac{1}{2\pi}  \int_{\Omega}  \ln \frac{|\calT(x)-\calT(y)|}{|\calT(x)-\calT(y)^*||\calT(y)|} \, \omega(t,y)dy,
\eeq
and then
\beq \lb{2.2}
u(t,x) =-\nabla^\perp \Psi (t,x) = \frac{1}{2\pi} D \calT(x)^T R(t,\calT(x))
\eeq
for any $(t,x)\in[0,\infty)\times\Omega$,
where
\beq\lb{7.19}
R(t,\xi) := \int_{\mathbb{D}} \left(\frac{\xi-z}{|\xi-z|^2} - \frac{\xi-z^*}{|\xi-z^*|^2}  \right)^{\perp}  \det D\calS(z)\, \omega(t,\calS(z)) dz
\eeq
for $(t,\xi)\in[0,\infty)\times\bbD$.  We note that the second equality in \eqref{2.2} holds because $\calT=(\calT^1,\calT^2)$ is analytic, which means that 
\beq\lb{9.1}
D\mathcal{T}= \begin{pmatrix} \partial_{x_1} \calT^1 & \partial_{x_2} \calT^1 \\ \partial_{x_1}  \calT^2 & \partial_{x_2} \calT^2 \end{pmatrix}= \begin{pmatrix} \partial_{x_1}  \calT^1 & \partial_{x_2} \calT^1 \\ - \partial_{x_2}  \calT^1 & \partial_{x_1}  \calT^1 \end{pmatrix}
\eeq
 and so for any $v\in\bbR^2$ we have
\[
\left( \begin{pmatrix} \partial_{x_1} \calT^1 & \partial_{x_1} \calT^2 \\ \partial_{x_2}  \calT^1 & \partial_{x_2} \calT^2 \end{pmatrix} v \right)^\perp = \begin{pmatrix} \partial_{x_2}  \calT^2 & -\partial_{x_2} \calT^1 \\ -\partial_{x_1} \calT^2 & \partial_{x_1} \calT^1 \end{pmatrix} v^\perp,
\]
Lemma \ref{L.3.6} and $u\cdot\nabla \Psi\equiv 0$  now
yield for any $x\in\Omega$ and $t\in[0,t_x)$,
\[
\frac d{dt} \Psi(t,X^x_t) = - \frac{1}{2\pi} \int_{\Omega}  \left(\frac{\calT(y)-\calT(X^x_t)}{|\calT(y)-\calT(X^x_t)|^2}  - \frac{\calT(y)-\calT(X^x_t)^*}{|\calT(y)-\calT(X^x_t)^*|^2}\right)^T D\calT(y)\, u(t,y)\,  \omega(t,y)  dy
\]
(the parenthesis is replaced by $\frac{\calT(y)}{|\calT(y)|^2}$ when $\calT(X^x_t)=0$).
If we substitute \eqref{2.2} here and use 
\beq\lb{7.18}
D\calT(y) D\calT(y)^T = \det D\calT(y)\, I_2
\eeq
 (note that $\det D\calT=(\partial_{x_1} \calT^1)^2+(\partial_{x_2} \calT^1)^2>0$),
after a change of variables we  obtain 
\[
\frac d{dt} \Psi(t,X^x_t) 
 =  - \frac{1}{4\pi^2} \int_{\bbD}  \left(\frac{z-\calT(X^x_t)}{|z-\calT(X^x_t)|^2}  - \frac{z-\calT(X^x_t)^*}{|z-\calT(X^x_t)^*|^2}\right) \cdot R(t,z)  \, \omega(t,\calS(z))  dz.
\]
Finally,  from this and the identity
\beq \lb{2.3}
\left| \frac z{|z|^2} - \frac {w}{|w|^2} \right| = \frac {|z-w|} {|z|\,|w|} 
\eeq
for all $z,w\in\bbC\setminus\{0\}$ we see that (with the fraction below replaced by $\frac1{|z|}$ when $\calT(X^x_t)=0$)
\beq \lb{2.4}
\left| \frac d{dt} \Psi(t,X^x_t)  \right| \le \frac{1}{4\pi^2} \int_{\bbD}  \frac{|\calT(X^x_t)-\calT(X^x_t)^*|}{|z-\calT(X^x_t)|\,|z-\calT(X^x_t)^*|} \, |R(t,z)| \, | \omega(t,\calS(z))|  dz.
\eeq

It will also be convenient to re-parametrize the forward tangent vector $\bar\tau$ to $\Omega$ to
\[
\tau(\tht) :=  \lim_{\phi \rightarrow \tht+} \frac{\calS(e^{i\phi})-\calS(e^{i\tht})}  {|\calS(e^{i\phi})-\calS(e^{i\tht})|} ,
\]
with $\tht\in\bbR$.  Then of course $\tau(\tht)=\bar\tau( \Gamma(e^{i\tht}))$ for all $\tht\in\bbR$, where $\Gamma:= \left(\sigma|_{(0,L]}\right)^{-1}\circ \calS$.  We now let $\{\bar\tht_j\}_{j\ge 1} \subseteq (0,L]$ be the set of all points such that $\Omega$ has a corner at $\sigma(\bar \tht_j)$, and define
\[
\tht_j:=\pi+ \arg\left(- \Gamma^{-1}(\bar\tht_j) \right) \in(0,2\pi] \qquad\text{and}\qquad \alpha_j:=\frac{\bar\alpha(\bar\tht_j)}\pi \in(-1,0)\cup(0,1) 
\]
for $j\ge 1$.  That is, $\Omega$ has corners at $\{\calS(e^{i\tht_j})\}_{j\ge 1} $ with angles $\{\pi-\pi\alpha_j\}_{j\ge 1} $.  Then we define 
\[
\beta_c(\tht):=\bar \beta_c(\Gamma(e^{i\tht}))  \qquad\text{and}\qquad \beta_d(\tht):= \pi\sum_{\tht_j \le \tht} \alpha_j
\]
 for $\tht\in(0,2\pi]$ and extend these two functions to $\bbR$ so that   for all $\tht \in \mathbb{R}$ we have
 \[
 \beta_c(\tht + 2\pi) = \beta_c(\tht) + 2\pi \kappa  \qquad\text{and}\qquad  \beta_d(\tht + 2\pi) = \beta_d(\tht) + 2\pi (1-\kappa),
 \]
  where $\kappa:=\frac{\bar\beta_c(L) - \bar\beta_c(0)}{2\pi}$ (which means that $\sum_{\tht\in(0,L]} \bar\alpha(\tht)=2\pi(1-\kappa)$).  Then of course $\beta_c$ is continuous, $\beta_d$ is piecewise constant, and $\beta:=\beta_c+\beta_d$ is the argument of $\tau$ in the sense that $e^{i\beta(\tht)}=\tau(\tht)$ for all $\tht\in\bbR$ (we also have $\beta(\tht + 2\pi) = \beta(\tht) + 2\pi$).

Lemma 1 in \cite{War} shows that $\Gamma$ and $\Gamma^{-1}$ are both H\" older continuous, which means that $\beta_c$ is Dini continuous because $\bar\beta_c$ is.  Indeed, if $\bar m$ is a modulus of continuity for $\bar\beta_c$ , then  $\beta_c$ has modulus of continuity $m(r):=\bar m(Cr^\gamma)$ for some $C,\gamma>0$,
and a simple change of variables shows that $\int_0^1 \frac{\bar m(Cr^\gamma)}r dr<\infty$ if and only if $\int_0^1 \frac{\bar m(r)}r dr<\infty$.

We next state the following important formula for $\det D\calS$.

\begin{lemma} \label{L.3.1}
We have
\[
\det D\calS(z) =  \det D\calS (0) \, \Pi_{j\ge1} |z-e^{i\theta_j}|^{-2\alpha_j} 
	\exp \left( -\frac{2}{\pi} \int_0^{2\pi}  {\rm Im}\, \frac{z} {e^{i\tht}-z} \left(  \beta_{c}(\tht) -\kappa\tht \right) d\tht \right)
\]
for each $z\in\bbD$ (this holds even without $\beta_c$ being Dini continuous), as well as
\[
\sup_{z\in\bbD} \left|  \int_0^{2\pi}  {\rm Im}\, \frac{z} {e^{i\tht}-z} \left(  \beta_{c}(\tht) -\kappa\tht \right) d\tht \right| <\infty. 
\]
\end{lemma}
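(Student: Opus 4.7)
The plan is to derive the formula as a Warschawski-type factorization of the conformal derivative $\calS'$, adapted to domains with countably many corners and a continuous interstitial tangent angle. Since $\calS:\bbD\to\Omega$ is biholomorphic, $\calS'$ is holomorphic and non-vanishing on $\bbD$, and the Cauchy--Riemann equations give $\det D\calS=|\calS'|^2$. The hypothesis $\sum_j|\alpha_j|<\infty$ lets me define the corner product $B(z):=\prod_j(1-ze^{-i\theta_j})^{-\alpha_j}$ (principal branches), which converges locally uniformly on $\bbD$ to a non-vanishing holomorphic function with $|B(z)|=\prod_j|z-e^{i\theta_j}|^{-\alpha_j}$ (using $|1-ze^{-i\theta_j}|=|z-e^{i\theta_j}|$ for each $j$). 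Hence $F:=\calS'/B$ is holomorphic and non-vanishing on $\bbD$, and applying $\exp(2\cdot)$ to each side reduces the first claim to
\[
\log|F(z)|=\log|F(0)|-\frac{1}{\pi}\int_0^{2\pi}\operatorname{Im}\frac{z}{e^{i\theta}-z}\left(\beta_c(\theta)-\kappa\theta\right)d\theta.
\]

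Both sides of this identity are harmonic in $z\in\bbD$ and agree at $z=0$, so their difference is a harmonic function $h$ vanishing at the origin; to show $h\equiv0$ I would analyze the non-tangential boundary behavior of $\log F$. Using the Warschawski boundary correspondence \cite{War} together with the identification of the counter-clockwise tangent direction $ie^{i\phi}\calS'(e^{i\phi})/|\calS'(e^{i\phi})|=\tau(\phi)=e^{i\beta(\phi)}$, one has $\arg\calS'(e^{i\phi})\equiv\beta(\phi)-\phi-\pi/2\pmod{2\pi}$ off the countable corner set. The identity $1-e^{i\psi}=-2i\sin(\psi/2)e^{i\psi/2}$ gives $\arg(1-e^{i\psi})=(\psi-\pi)/2$ for $\psi\in(0,2\pi)$, so $\arg B(e^{i\phi})$ has precisely a jump of $+\pi\alpha_j$ at each $\theta_j$, matching the jump of $\beta_d$ exactly. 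Consequently $\arg F(e^{i\phi})$ is continuous in $\phi$ and equals, up to an affine drift absorbed in the $\kappa\theta$ normalization, the $2\pi$-periodic function $\beta_c(\phi)-\kappa\phi$. Applying the Schwarz representation to the holomorphic function $\log F$ and using $\operatorname{Im}\frac{e^{i\theta}+z}{e^{i\theta}-z}=2\operatorname{Im}\frac{z}{e^{i\theta}-z}$ yields the displayed real-part identity.

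For the sup bound, I set $g(\theta):=\beta_c(\theta)-\kappa\theta$, which is $2\pi$-periodic and has a Dini modulus of continuity $m$ with $\int_0^\pi m(r)/r\,dr<\infty$ by hypothesis. A residue computation gives $\int_0^{2\pi}\frac{z}{e^{i\theta}-z}\,d\theta=0$ for each $z\in\bbD$, hence $\int_0^{2\pi}\operatorname{Im}\frac{z}{e^{i\theta}-z}\,d\theta=0$, so the integrand can be replaced by $\operatorname{Im}\frac{z}{e^{i\theta}-z}(g(\theta)-g(\phi))$ with $\phi:=\arg z$. Combining the bound $\bigl|\operatorname{Im}\frac{z}{e^{i\theta}-z}\bigr|\le C|e^{i\theta}-e^{i\phi}|/|e^{i\theta}-z|^2$ with $|g(\theta)-g(\phi)|\le m(|\theta-\phi|)$ and standard disc estimates on $|e^{i\theta}-z|$ yields a uniform bound by a constant times $\int_0^\pi m(r)/r\,dr<\infty$.

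The main obstacle is the boundary analysis in the middle paragraph: matching the countably many branch jumps of $\arg B$ and $\beta_d$ simultaneously, invoking the Warschawski non-tangential correspondence in the presence of potentially densely packed corners, and justifying that the Schwarz representation of $\log F$ recovers the (merely continuous, for the first part of the claim) boundary data uniformly.
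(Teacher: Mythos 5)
Your treatment of the second claim (the sup bound) is essentially the paper's own argument: subtract $g(\arg z)$ using $\int_0^{2\pi}{\rm Im}\,\frac{z}{e^{i\theta}-z}\,d\theta=0$ and bound the result by a constant times $\int_0^\pi m(r)/r\,dr$; that part is fine. Your corner product $B$ does converge locally uniformly under $\sum_j|\alpha_j|<\infty$, and the jump bookkeeping matching $\arg(1-e^{i\psi})=(\psi-\pi)/2$ against the jumps of $\beta_d$ is correct. But the first claim has a genuine gap exactly where you flag it. The paper does not re-derive the boundary representation: it invokes Pommerenke's Theorem~3.15, which asserts that for a regulated Jordan domain $\calS'(z)=|\calS'(0)|\exp\bigl(\frac{i}{2\pi}\int_0^{2\pi}\frac{e^{i\theta}+z}{e^{i\theta}-z}(\beta(\theta)-\theta-\frac{\pi}{2})\,d\theta\bigr)$, and then extracts the corner product from the $\beta_d$ contribution by an integration by parts against $d\beta_d$, using $\int_0^{2\pi}\ln(1-ze^{-i\theta})\,d\theta=0$ to kill the linear drift. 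That cited theorem is precisely the hard analytic content your middle paragraph proposes to reprove.

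The route you sketch to close it --- the difference $h$ of the two harmonic functions vanishes at the origin and has matching non-tangential boundary data, hence $h\equiv 0$ --- is not valid as stated. Non-tangential boundary values a.e.\ do not determine a harmonic function: the Poisson integral of a singular measure has non-tangential limit $0$ a.e.\ without vanishing identically. To make your argument work you would need to show in addition that $\log\calS'$ (equivalently $\log F$) lies in a suitable Hardy-type class and equals the Schwarz integral of its boundary argument with no singular inner part; for merely regulated boundaries (continuous $\beta_c$, possibly a dense set of corners, and the lemma explicitly claims the identity without Dini continuity) this is exactly what Pommerenke's theorem supplies, and it does not follow from a uniqueness-of-harmonic-extensions argument alone. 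Either cite that theorem, as the paper does, or supply the full $H^p$/Privalov-type justification; as written, the middle paragraph restates what must be proved rather than proving it.
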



\begin{proof}
Since $\calS$ is analytic,  $\det D\calS(z)=|\calS'(z)|^2$, where $\calS'$ is the complex derivative when $\calS$ is considered as a function on $\bbC$.  
Since $\Omega$ is regulated, Theorem~3.15 in \cite{Pomm}  shows that
\[
\mathcal{S}'(z) = |\mathcal{S}'(0)| \, \exp \left(\frac{i}{2\pi} \int_0^{2\pi} \frac{e^{i\tht}+z} {e^{i\tht}-z} \left(  \beta(\tht) -\tht-\frac\pi 2 \right) d\tht \right)
\]
for all $z\in\bbD$, and from $\int_0^{2\pi} \frac{e^{i\tht}+z} {e^{i\tht}-z}\,d\tht = 2\pi \in\bbR$ and ${\rm Im}\, \frac{e^{i\tht}+z} {e^{i\tht}-z} = 2 {\rm Im}\, \frac{z} {e^{i\tht}-z}$ we get
\beq \lb{7.2}
\det D\calS (z)  = \det D\calS (0) \, \exp \left( -\frac{2}{\pi} \,{\rm Im} \int_0^{2\pi}   \frac{z} {e^{i\tht}-z} \left( \beta (\tht) -\tht \right) d\tht \right)
\eeq
(note that $ \beta(\tht) -\tht$ is $2\pi$-periodic).  We split the integral into two parts, one of which is
	\[
	\int_0^{2\pi} \frac{z} {e^{i\tht}-z} \left( \beta_{d}(\tht) - (1-\kappa)\tht \right) d\tht = i \int_0^{2\pi} \ln(1-ze^{-i\tht})\, d\left({\beta}_{d}(\theta)-(1-\kappa)\tht \right),
	\]
	where we used integration by parts.  Since $\int_0^{2\pi} \ln(1-ze^{-i\tht})  d\tht=\ln 1 =0$, it follows that
\begin{align*}
\exp \left( -\frac{2}{\pi} \,	{\rm Im} \int_0^{2\pi}   \frac{z} {e^{i\tht}-z} \big( \beta_{d}(\tht) - (1-\kappa)\tht \big) d\tht \right) 
&= \exp \left( -\frac{2}{\pi} \,\int_0^{2\pi} \ln|e^{i\tht}-z| \,d \beta_{d}(\tht) \right)
\\ & = \Pi_{j\ge 1} |z-e^{i\theta_j}|^{-2\alpha_j} .
\end{align*}
This and \eqref{7.2} prove the first claim.

   	Let  $\tilde{\beta}(\tht) = \beta_c(\tht) -\kappa \tht$, which is also $2\pi$-periodic. If $\beta_c$ has a Dini modulus of continuity $m$, then $\tilde{\beta}$ has Dini modulus  $\tilde{m}(r) := m(r) + |\kappa| r$.
So for any $z\in\bbD$ and  $\tht_z := \arg(z)$ we obtain using ${\rm Im}\, \frac{z} {e^{i(-\tht+\tht_z)}-z} = -{\rm Im}\, \frac{z} {e^{i(\tht+\tht_z)}-z}$ and $\left|\frac{z} {e^{i(\tht+\tht_z)}-z}\right|\le \frac \pi{2|\tht|}$ for any $\tht\in\bbR$ the estimate
%
\[
	\left| \int_0^{2\pi}  {\rm Im}\, \frac{z} {e^{i\tht}-z} \tilde{\beta}(\tht) d\tht \right| = 	 \left| \int_{-\pi}^{\pi}  {\rm Im}\, \frac{z} {e^{i(\tht+\tht_z)}-z} \left( \tilde{\beta}(\tht+\tht_z) - \tilde{\beta}(\tht_z) \right) d\tht \right|  
	\leq \left| \frac \pi 2 \int_{-\pi}^{\pi} \frac{ \tilde{m}(|\tht|)}{|\theta|}    d\tht\right|.
\]
Since this is finite, the second claim follows.
\end{proof}

In view of \eqref{2.4}, \eqref{7.19}, and this lemma,   of particular concern to us will be corners corresponding to $\alpha_j>0$ (i.e.,  those with angles less than $\pi$; note that  the  velocity $u$ on $\Omega$ in fact vanishes at these, while it may be infinite at the other corners).
We therefore let $\alpha_j^+:=\max\{\alpha_j,0\}$ and define  $\beta_d^+(\tht) := \pi\sum_{\tht_j \le \tht} \alpha_j^+$ for all $\tht\in(0,2\pi]$.  We then extend $\beta_d^+$ to $\bbR$ so that $\beta_d^+(\tht+2\pi)=\beta_d^+(\tht)+\pi\sum_{j\ge 1} \alpha_j^+$, 
and choose $\delta\in(0,\frac{1}{8} ]$ such that 
\beq \lb{1.11}
\frac {\beta_d^+(\tht+3\delta) - \beta_d^+(\tht-3\delta)}\pi \le \alpha_*:= \frac{1+\max_{j\ge 1} \alpha_j^+ }{2}
\eeq
for each $\tht\in\bbR$.  Note that  $\alpha_*\in[\frac 12,1)$ because $\max_{j\ge 1} \alpha_j^+ <1$ by $\sum_{j\ge 1} |\alpha_j| < \infty$.

%
%
%

\subsection{Estimates on the stream function and conclusion of the proof}

We now state the following three crucial estimates, whose proofs we postpone to the next section.  In them, constants $C_\Omega$ and $C_\Omega'$  only depend on $\Omega$.

 \begin{lemma} \label{L.3.3}
There is $C_\Omega > 0$ such that  for each $(t,\xi)\in [0,\infty)\times \bbD$ we have
\[
|\Psi(t, \calS(\xi))| \leq C_\Omega \|\omega(t,\cdot)\|_{L^\infty} (1-| \xi |)^{2\min\{1-\alpha_*, 1/4\}}.
\]
\end{lemma}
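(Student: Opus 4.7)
\noindent The plan is to rewrite $\Psi(t,\calS(\xi))$ as an integral against the Dirichlet Green's function of the unit disc and then estimate it using the pointwise bound on $\det D\calS$ from Lemma~\ref{L.3.1}. From \eqref{2.0}, the identity $|\calT(x)-\calT(y)^*|\cdot|\calT(y)|=|1-\calT(x)\overline{\calT(y)}|$ (valid since $w^*=1/\bar w$), and the change of variables $y=\calS(z)$, one obtains
\[
\Psi(t,\calS(\xi))=\frac{1}{2\pi}\int_\bbD G_\bbD(\xi,z)\,\omega(t,\calS(z))\,\det D\calS(z)\,dz,
\]
where $G_\bbD(\xi,z):=\ln(|1-\xi\bar z|/|\xi-z|)\geq 0$. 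Bounding $|\omega|\leq\|\omega(t,\cdot)\|_\infty$ and using positivity of $\det D\calS$, the task reduces to showing $\int_\bbD G_\bbD(\xi,z)\det D\calS(z)\,dz\leq C_\Omega(1-|\xi|)^{2\min\{1-\alpha_*,1/4\}}$ uniformly in $\xi\in\bbD$.

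The analytic heart is the elementary estimate
\[
G_\bbD(\xi,z)=\tfrac12\ln\!\Big(1+\tfrac{(1-|\xi|^2)(1-|z|^2)}{|\xi-z|^2}\Big)\leq C_a\Big(\tfrac{(1-|\xi|)(1-|z|)}{|\xi-z|^2}\Big)^{a},\qquad a\in(0,1],
\]
obtained from $\ln(1+x)\leq C_a x^a$ and $1-|w|^2\leq 2(1-|w|)$. Setting $a:=2\min\{1-\alpha_*,1/4\}\in(0,1/2]$ and factoring out $(1-|\xi|)^a$, it suffices to show that $J(\xi):=\int_\bbD(1-|z|)^a|\xi-z|^{-2a}\det D\calS(z)\,dz$ is bounded by $C_\Omega$ uniformly in $\xi\in\bbD$. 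By Lemma~\ref{L.3.1} (its second assertion controls the exponential factor, and the product $\prod_{j:\alpha_j<0}|z-e^{i\theta_j}|^{-2\alpha_j}$ is bounded since each factor is $\leq 2^{-2\alpha_j}$ and $\sum|\alpha_j|<\infty$), $\det D\calS(z)\leq C_\Omega\prod_{j:\alpha_j>0}|z-e^{i\theta_j}|^{-2\alpha_j}$.

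To bound $J(\xi)$ I would split $\bbD$ into a bulk region (where $\det D\calS$ is uniformly bounded, so the estimate follows from $\int_\bbD(1-|z|)^a|\xi-z|^{-2a}\,dz\leq C$, using $2a\leq 1<2$) and, for each corner $e^{i\theta_{j_0}}$ with $\alpha_{j_0}>0$, a local neighborhood handled via polar coordinates centered at $e^{i\theta_{j_0}}$ and rescaling by $r_0:=|\xi-e^{i\theta_{j_0}}|$; this converts the local integral into $r_0^{2-a-2\alpha_{j_0}}$ times a rescaled integral over $s\in(0,\delta/r_0)$ whose growth in $\delta/r_0$ exactly cancels the $r_0$-prefactor, yielding a uniform bound. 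The choice $a=2\min\{1-\alpha_*,1/4\}$ is designed so that $a\leq 2(1-\alpha_*)<2-2\max_j\alpha_j^+\leq 2-2\alpha_{j_0}$ (using $\alpha_*<1$), making the scaling exponent strictly positive for every corner; the cap $a\leq 1/2$ also keeps the angular factor integrable. The main obstacle is the borderline regime $\alpha_*=3/4$ (where $a=1/2$ and the angular factor acquires a logarithmic divergence along the diagonal in the rescaled variables); however, this logarithm is integrable against the planar measure, so the local contribution still satisfies $J_{j_0}(\xi)\lesssim r_0^{1/2}\log(1/r_0)$, which is uniformly bounded on $\bbD$. Finally, $\xi$ near several corners simultaneously is manageable because only finitely many corners have $\alpha_j$ bounded below by any fixed positive constant (by $\sum|\alpha_j|<\infty$), so summing the finitely many local pieces still gives $J(\xi)\leq C_\Omega$.
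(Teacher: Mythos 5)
Your reduction of the lemma to bounding $J(\xi)=\int_\bbD(1-|z|)^a|\xi-z|^{-2a}\det D\calS(z)\,dz$ is sound and essentially parallels the paper's first step (the paper uses $-\ln(1-r)\le(r/(1-r))^{1/2}$, i.e.\ your logarithm estimate with $a=\tfrac12$, and carries the extra power of $1-|\xi|$ through the corner analysis instead of baking it into $a$), and your rescaling treatment of a single corner is correct. The genuine gap is the step where you control $\prod_{j:\alpha_j>0}|z-e^{i\theta_j}|^{-2\alpha_j}$ by splitting $\bbD$ into ``a bulk region where $\det D\calS$ is uniformly bounded'' plus finitely many per-corner neighborhoods. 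The theorem allows infinitely many corners, possibly dense in $\partial\Omega$. In that case no bulk region with bounded $\det D\calS$ reaching $\partial\bbD$ exists, every neighborhood of a given corner contains infinitely many other corners each contributing an unbounded factor, and the local singularity of the product near an accumulation point is not a single power law. Your observation that only finitely many $\alpha_j$ exceed any fixed threshold does not repair this: the infinitely many small-$\alpha_j$ corners still each produce an unbounded factor, you would have to sum infinitely many local contributions, and near each corner you would still need to bound the product over all the others.

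The paper closes exactly this gap with the rearrangement inequality \eqref{2.1} (a consequence of Lemma \ref{L.3.0}): under the integral sign, the whole product $\Pi_{\tht_j\in I}|z-e^{i\theta_j}|^{-2\alpha_j^+}$ over corners in the angular window $I=(\arg\xi-2\delta,\arg\xi+2\delta)$ is replaced by the single factor $|z-\tilde\xi|^{-2\sum_{\tht_j\in I}\alpha_j^+}$ with $\tilde\xi:=\xi/|\xi|$, and the choice of $\delta$ in \eqref{1.11} guarantees $\sum_{\tht_j\in I}\alpha_j^+\le\alpha_*<1$; the contribution from outside $B(\xi,\delta)$ is disposed of by the $L^1$ bound on $\Lambda$. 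Note that this replacement is \emph{not} a pointwise inequality on the product; it is only valid inside an integral against an angularly monotone weight. Your weight $(1-|z|)^a|\xi-z|^{-2a}$ is radial times angularly decreasing about the ray through $\xi$, so it qualifies, and if you import \eqref{2.1} your computation reduces to a single-corner integral centered at $\tilde\xi$ with exponent $2\alpha_*$, after which your scaling argument finishes the proof. Without it, your argument only covers domains with finitely many corners.
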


\begin{lemma} \label{L.3.4}
	If $\omega \ge 0$, then for each $(t,\xi)\in [0,\infty)\times \bbD$ we have 
\[
\Psi(t,\calS(\xi)) \geq \frac{ 1-|\xi|}{100\pi}   \int_{\mathbb{D}} \frac{(1-|z|)  \det D\calS(z)  }{\max\{|z-\xi|, 1-|\xi|\}^2} \, \omega(t,\calS(z)) dz .
\]
\end{lemma}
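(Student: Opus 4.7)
\textbf{Proof plan for Lemma \ref{L.3.4}.}

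The plan is to work entirely from the explicit formula \eqref{2.0} for the stream function after pulling the integral back to the disc via the Riemann map $\calS$, and then to extract a clean pointwise lower bound on the Green's function of the disc. Changing variables $y=\calS(z)$ in \eqref{2.0} and using the standard disc identity $|\xi-z^*|\,|z|=|1-\bar z\xi|$ (valid because $z^*=1/\bar z$), I rewrite
\[
\Psi(t,\calS(\xi))=\frac{1}{2\pi}\int_{\bbD}\ln\frac{|1-\bar z\xi|}{|\xi-z|}\,\det D\calS(z)\,\omega(t,\calS(z))\,dz .
\]
Since $\omega\ge 0$ and $\det D\calS>0$, the whole problem reduces to a pointwise lower bound on the Green's kernel $\ln\frac{|1-\bar z\xi|}{|\xi-z|}$ for $\xi,z\in\bbD$.

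Next I will use the well-known identity
\[
|1-\bar z\xi|^{2}=|\xi-z|^{2}+(1-|z|^{2})(1-|\xi|^{2})
\]
together with the elementary inequality $\ln(1+r)\ge \frac{r}{1+r}$ for $r\ge 0$, applied with $r=\frac{(1-|z|^{2})(1-|\xi|^{2})}{|\xi-z|^{2}}$, which gives
\[
2\ln\frac{|1-\bar z\xi|}{|\xi-z|}\ge \frac{(1-|z|^{2})(1-|\xi|^{2})}{|1-\bar z\xi|^{2}} .
\]
This replaces $|\xi-z|$ in the denominator by $|1-\bar z\xi|$, which behaves well near the boundary.

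The main step is then to show that $|1-\bar z\xi|^{2}\le 9\,\max\{|\xi-z|,1-|\xi|\}^{2}$ for all $\xi,z\in\bbD$. I will split into two cases. If $|\xi-z|\ge 1-|\xi|$, then $1-|\xi|^{2}\le 2|\xi-z|$ and, by $||z|-|\xi||\le|\xi-z|$, also $1-|z|^{2}\le 2(1-|z|)\le 2\bigl((1-|\xi|)+|\xi-z|\bigr)\le 4|\xi-z|$, so the identity gives $|1-\bar z\xi|^{2}\le 9|\xi-z|^{2}$. If instead $|\xi-z|<1-|\xi|$, then $||z|-|\xi||<1-|\xi|$ forces $1-|z|\le 2(1-|\xi|)$, hence $(1-|z|^{2})(1-|\xi|^{2})\le 8(1-|\xi|)^{2}$ and $|1-\bar z\xi|^{2}\le 9(1-|\xi|)^{2}$. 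This case analysis is the only non-cosmetic calculation in the proof and I expect it to be the main obstacle, mostly because one has to pick the right definition of the $\max$ and track constants carefully.

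Combining these inequalities with $1-|w|^{2}\ge 1-|w|$ yields
\[
\ln\frac{|1-\bar z\xi|}{|\xi-z|}\ \ge\ \frac{(1-|z|)(1-|\xi|)}{18\,\max\{|\xi-z|,1-|\xi|\}^{2}} ,
\]
and plugging this into the integral representation above gives
\[
\Psi(t,\calS(\xi))\ \ge\ \frac{1-|\xi|}{36\pi}\int_{\bbD}\frac{(1-|z|)\,\det D\calS(z)}{\max\{|z-\xi|,1-|\xi|\}^{2}}\,\omega(t,\calS(z))\,dz ,
\]
which is stronger than the claimed bound (the constant $100\pi$ in the statement is comfortably larger than $36\pi$), completing the proof.
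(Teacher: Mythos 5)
Your proof is correct and follows essentially the same route as the paper: the identity $|1-\bar z\xi|^2=|\xi-z|^2+(1-|z|^2)(1-|\xi|^2)$ is the paper's \eqref{3.1} in disguise (since $|1-\bar z\xi|=|\xi-z^*|\,|z|$), your inequality $\ln(1+r)\ge r/(1+r)$ is equivalent to the paper's $-\ln(1-r')\ge r'$, and your case analysis giving $|1-\bar z\xi|\le 3\max\{|z-\xi|,1-|\xi|\}$ plays the role of the paper's one-line triangle-inequality bound $|\xi-z^*|\,|z|\le 5\max\{|z-\xi|,1-|\xi|\}$. The only difference is the slightly sharper constant ($36\pi$ versus $100\pi$), which is immaterial.
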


\begin{lemma}\label{L.3.5}
There is $C_\Omega' > 0$ such that	
for each $(t,\xi)\in [0,\infty)\times (\bbD\setminus B(0,\frac 34))$
we have  
	\begin{align*}
	\int_{\mathbb{D}} \frac{|R(t,z)|}{|z-\xi||z-\xi^*|} dz \leq C_\Omega' |\ln(1-|\xi|)| \left( \int_{\mathbb{D}} \frac{(1-|z|)  \det D\calS(z)  }{\max\{|z-\xi|, 1-|\xi|\}^2} \, |\omega(t,\calS(z))|  dz + \|\omega(t,\cdot)\|_{L^\infty} \right).
	\end{align*}
\end{lemma}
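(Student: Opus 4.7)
The plan is to begin by applying identity \eqref{2.3} to the Biot--Savart type expression $\frac{\xi-w}{|\xi-w|^2} - \frac{\xi-w^*}{|\xi-w^*|^2}$ appearing in \eqref{7.19}, which together with $|w-w^*| = (1-|w|^2)/|w|$ for $w\in\bbD\setminus\{0\}$ produces the pointwise bound
\[
|R(t,z)| \le \int_\bbD \frac{1-|w|^2}{|w|\,|z-w|\,|z-w^*|}\,\det D\calS(w)\,|\omega(t,\calS(w))|\,dw.
\]
Substituting this into the left-hand side of the lemma and switching the order of integration (justified by non-negativity of the integrand) reduces the proof to estimating
\[
K(\xi,w) := \int_\bbD \frac{dz}{|z-\xi|\,|z-\xi^*|\,|z-w|\,|z-w^*|}
\]
and then integrating $(1-|w|^2)|w|^{-1}\det D\calS(w)\,|\omega(t,\calS(w))|\,K(\xi,w)$ over $w\in\bbD$.

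The core of the argument is a pointwise estimate for $K(\xi,w)$. For this I would use $|z-\xi^*|\,|\xi| = |1-\bar\xi z|$ together with the Pythagoras-type formula $|1-\bar\xi z|^2 = |z-\xi|^2 + (1-|z|^2)(1-|\xi|^2)$, which (using $|\xi|\ge 3/4$) yields $|z-\xi^*| \gtrsim \max\{|z-\xi|,\,1-|\xi|\}$ on $\bbD$. The analogous lower bound $|z-w^*| \gtrsim \max\{|z-w|,\,1-|w|\}$ holds when $|w|\ge \tfrac12$, while for $|w|<\tfrac12$ one has $|z-w^*| \ge (1-|w|)/|w| \gtrsim 1$ trivially. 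Inserting these and splitting $\bbD$ into regions according to whether $|z-\xi|$ and $|z-w|$ are small or large compared to $1-|\xi|$, $1-|w|$, and $|w-\xi|$, direct computation should give
\[
K(\xi,w) \le C\cdot\frac{|\ln(1-|\xi|)|\,|w|}{(1-|w|)\,\max\{|w-\xi|,\,1-|\xi|\}^2}
\]
outside a neighborhood of $w=\xi$: the logarithmic factor arises from integrating $dr/r$ on the annulus where $1-|\xi|\lesssim|z-\xi|\lesssim|w-\xi|$ (when $|w-\xi|\ge 1-|\xi|$), while the remaining regions contribute at most a constant multiple of the other factor. Multiplying this pointwise bound against the weight $(1-|w|^2)|w|^{-1}\det D\calS(w)|\omega(t,\calS(w))|$ and integrating in $w$ reproduces the first term on the right-hand side of the lemma.

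For the near-diagonal region $|w-\xi| \le 2(1-|\xi|)$, where the above pointwise bound degenerates (both $\max\{|w-\xi|,1-|\xi|\}$ and the starred distances being of order $1-|\xi|$), I would instead pull $\|\omega(t,\cdot)\|_\infty$ out of the integral and estimate
\[
\int_{|w-\xi|\le 2(1-|\xi|)} \frac{(1-|w|^2)\det D\calS(w)}{|w|}\,K(\xi,w)\,dw
\]
directly by $C_\Omega'|\ln(1-|\xi|)|$. In this region $|w|\gtrsim 1$ and $1-|w| \lesssim 1-|\xi|$, so the prefactor is harmless; the remaining purely geometric integral is handled by switching the order of integration back, applying the baseline bound $\int_\bbD|z-\xi|^{-1}|z-\xi^*|^{-1}dz \lesssim |\ln(1-|\xi|)|$, and using the global integrability $\int_\bbD \det D\calS(w)\,dw = |\Omega|$ together with the explicit description of $\det D\calS$ near the corners $e^{i\theta_j}$ from Lemma \ref{L.3.1}.

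The main obstacle is the pointwise estimate on $K(\xi,w)$: four singular factors in the integrand and the necessity of tracking the relative sizes of $|w-\xi|$, $1-|\xi|$, $1-|w|$ make for a delicate case analysis. A potentially cleaner route is to pre-compose with the M\"obius automorphism of $\bbD$ sending $\xi$ to $0$, which collapses the pair $|z-\xi|,|z-\xi^*|$ into a single factor comparable to $|z|$ and reduces $K(\xi,w)$ to a two-singularity integral amenable to Forelli--Rudin-type estimates on $\bbD$; the residual bookkeeping is then the transformation of the weights $\det D\calS$ and $(1-|w|^2)$ under the M\"obius change of variables.
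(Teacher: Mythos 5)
Your overall strategy (apply \eqref{2.3} inside \eqref{7.19}, use Fubini, and estimate the resulting four-factor kernel $K(\xi,w)$) has the same skeleton as the paper's proof, which likewise reduces everything to iterated integrals of the type \eqref{6.4} over a decomposition of $\bbD\times\bbD$. But the central pointwise bound you propose for $K(\xi,w)$ does not close the argument. Multiplying $K(\xi,w)\le C|\ln(1-|\xi|)|\,|w|\,(1-|w|)^{-1}\max\{|w-\xi|,1-|\xi|\}^{-2}$ by your weight $(1-|w|^2)|w|^{-1}\det D\calS(w)\,|\omega(t,\calS(w))|$ cancels the factor $1-|w|$ entirely and leaves $2C|\ln(1-|\xi|)|\,\max\{|w-\xi|,1-|\xi|\}^{-2}\det D\calS(w)\,|\omega(t,\calS(w))|$, i.e.\ the integrand of the lemma's first term \emph{without} the numerator $1-|w|$. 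That loss is fatal: if $\xi=(1-\eps)e^{i\theta_j}$ approaches a corner with $\alpha_j>0$ and $\omega\equiv 1$ nearby, then $\int_{B(\xi,2\eps)}\det D\calS(w)\,dw\sim\eps^{2-2\alpha_j}$ by Lemma \ref{L.3.1}, so your bound produces a contribution of size $\eps^{-2\alpha_j}|\ln\eps|$, whereas the lemma's right-hand side is only of size $\eps^{1-2\alpha_j}|\ln\eps|$; this is exactly the discrepancy that would destroy the comparison with $\Psi$ via Lemma \ref{L.3.4} in the application. Nor can the kernel bound simply be sharpened to restore the factor $1-|w|$: integrating $|z-w|^{-1}|z-w^{*}|^{-1}$ near $z\approx w$ inevitably produces a factor $|\ln(1-|w|)|$, not $|\ln(1-|\xi|)|$, and $(1-|w|)|\ln(1-|w|)|$ is not dominated by $(1-|w|)|\ln(1-|\xi|)|$ when $1-|w|\ll 1-|\xi|$.

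That last regime is the real danger zone, and your case analysis misses it. The region you treat separately, $|w-\xi|\le 2(1-|\xi|)$, is comparatively harmless; the problem is the thin collar of $w$ with $1-|w|$ much smaller than $1-|\xi|$ but $|w-\xi|\gtrsim 1-|\xi|$, where $\det D\calS$ can blow up at corners and no pointwise comparison with the first term of the lemma is available. The paper isolates exactly this set as $F_5$, a collar of width $(1-|\xi|)^{1/(1-\alpha_*)}$ along $\partial\bbD$ inside $B(\xi,2\delta)\setminus B(\xi,\eps/8)$, and bounds its contribution directly by $C\|\omega\|_{L^\infty}|\ln(1-|\xi|)|$ using three ingredients your proposal does not bring to bear there: the product formula of Lemma \ref{L.3.1}, the rearrangement inequality \eqref{2.1} to replace $\Pi_{\tht_j\in I}|w-e^{i\theta_j}|^{-2\alpha_j^+}$ by a single power $\bigl|w-\tfrac{\xi}{|\xi|}\bigr|^{-2\alpha_*}$ with $\alpha_*<1$ (via \eqref{1.11}), and the smallness of $|F_5|$. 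Without an argument of this type for that collar, the proof does not go through.
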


{\it Remarks.}  
1.  Lemmas \ref{L.3.3} and \ref{L.3.4} are sharper and more general versions of Lemmas 3.1 and 3.2 in \cite{Lacave-SIAM}.  Our use of Lemma \ref{L.3.5} to estimate $\partial_t\Psi$ is analogous to the use of Proposition 2.4 and Lemma 3.5 in \cite{Lacave-SIAM}, but instead of bounding $|R|$ above by essentially $\|\omega\|_{L^\infty}$ and leaving $\omega$ as a function, we bound $\omega$ by $\|\omega\|_{L^\infty}$ and leave $|R|$ in \eqref{2.4}.  This is because  for the domains $\Omega$ considered here,  $R$ can blow up at $\partial\bbD$ (see \eqref{7.9} below).   In particular, this happens at corners with angles $\le\frac\pi 2$, which is why such corners had to be excluded in \cite{Lacave-SIAM}.
\smallskip

2. Lemma \ref{L.3.5} easily extends to $\xi \in B(0,\frac 34)$ but we will not need this.
\smallskip

From now assume also that $\omega\ge 0$.
Since $(1-|z|)  \det D\calS(z) $ is bounded below by a positive constant on $B(0,r)$ for any $r<1$ due to Lemma \ref{L.3.1}, for any $a>0$ there is $c_a>0$ such that the second integral in Lemma \ref{L.3.5} is bounded below by $c_a\|\omega(t,\cdot)\|_{L^\infty}$  whenever 
\[
 \|\omega(t,\cdot)\|_{L^1} \ge a\|\omega(t,\cdot)\|_{L^\infty}.
\]
From this, the above lemmas, and \eqref{2.4} it follows that when $|\calT(X^x_t)|\ge \frac 34$ (in which case  also $|\calT(X^x_t)-\calT(X^x_t)^*|\le 3 (1-|\calT(X^x_t)|)$), then we have
\begin{align} 
\left| \frac d{dt} \Psi(t,X^x_t)  \right| & \le \frac {75 C_\Omega'}{\pi} \frac{1+c_a}{c_a} \|\omega(t,\cdot)\|_{L^\infty} |\ln(1-|\calT(X^x_t)|)| \Psi(t,X^x_t) \notag
\\ & \le C_{a,\Omega} \|\omega(t,\cdot)\|_{L^\infty}  \Psi(t,X^x_t)  \left| \ln \frac{\Psi(t,X^x_t) }{ C_\Omega \|\omega(t,\cdot)\|_{L^\infty}} \right|, \lb{7.1}
\end{align}
where $C_{a,\Omega}>0$ is some constant that only depends on $(a,\Omega)$. 
%
%

For each $\eps>0$ let $\Omega_\eps:=\Omega\setminus \bigcup_{x\in\partial\Omega}B(x,\eps)$.  For each $\eps>0$ such that $\Omega_{2\eps}\neq\emptyset$,  let 
\[
T_\eps:={\rm dist}(\Omega_{2\eps},\Omega\setminus \Omega_{\eps}) \|u\|_{L^\infty((0,\infty)\times\Omega_\eps)}^{-1} >0.
\]
Then $X^x_t\in\Omega_\eps$ for all $(t,x)\in [0,T_\eps]\times\Omega_{2\eps}$, and therefore \eqref{7.8} yields $\omega(t,X^x_t)=\omega_0(x)$ for all $(t,x)\in [0,T_\eps]\times \Omega_{2\eps}$.
Taking $\eps\to 0$ we obtain 
\[
\|\omega_0\|_{L^\infty}\le \liminf_{t\to 0} \|\omega(t,\cdot)\|_{L^\infty}\le \|\omega\|_{L^\infty},
\]
and then from $\nabla \cdot u\equiv 0$ also
\beq\lb{7.5}
\|\omega(t,\cdot)\|_{L^1} \ge \|\omega_0\|_{L^1(\Omega_{2\eps})}
\ge \|\omega_0\|_{L^1} - |\Omega\setminus \Omega_{2\eps}|\, \|\omega_0\|_{L^\infty} 
\ge \|\omega_0\|_{L^1} - |\Omega\setminus \Omega_{2\eps}|\, \|\omega\|_{L^\infty}
\eeq
for each $\eps>0$ and all $t\in[0,T_\eps]$. 

If now $\omega_0\not\equiv 0$, let $a:= \frac 12 \|\omega_0\|_{L^1} \|\omega\|_{L^\infty}^{-1}>0$ and let $\eps>0$ be such that  $|\Omega\setminus \Omega_{2\eps}|\le a$. From \eqref{7.5} we obtain
\[
\|\omega(t,\cdot)\|_{L^1} \ge a \|\omega\|_{L^\infty}\ge a \|\omega(t,\cdot)\|_{L^\infty}
\]
for all $t\in[0,T_\eps]$.  Thus \eqref{7.1} yields
\beq\lb{7.6}
\left| \frac d{dt} \Psi(t,X^x_t)  \right|  \le C_{a,\Omega} \|\omega\|_{L^\infty}  \Psi(t,X^x_t)  \left| \ln \frac{\Psi(t,X^x_t) }{ C_\Omega \|\omega\|_{L^\infty}} \right|
\eeq
for all $(t,x)\in[0,T_\eps]\times \Omega$ such that $|\calT(X^x_t)|\ge \frac 34$.  This and Gronwall's inequality show that $X^x_t\in\Omega$ for all $(t,x)\in[0,T_\eps]\times\Omega$.  Therefore $\omega(t,X^x_t)=\omega_0(x)$ for all $(t,x)\in [0,T_\eps]\times \Omega$, and in particular, $\|\omega(T_\eps,\cdot)\|_{L^1} = \|\omega_0\|_{L^1}$.
We can therefore repeat this argument with the same $a$ and $\eps$ on the time interval $[T_\eps, 2 T_\eps]$, then on $[2T_\eps, 3T_\eps]$, etc.

It follows that $\omega$ is a Lagrangian solution to \eqref{1.4} on  $(0,\infty)\times\Omega$ and $\|\omega(t,\cdot)\|_{L^p}=\|\omega_0\|_{L^p}$ for all $(t,p)\in[0,\infty)\times[1,\infty]$.  Integrating \eqref{7.6} shows that there is a constant $C_\omega$ (depending on $\|\omega_0\|_{L^\infty},\|\omega_0\|_{L^1},\Omega$) such that for each $\eps>0$ and all large enough $t>0$ we have $\Psi(t,X^x_t)\ge \exp(-e^{C_\omega t})$ whenever $\Psi(0,x)\ge\eps$.
Since Lemma \ref{L.3.4} yields $C_\omega'>0$ such that $\Psi(t,\calS(\xi))\ge C_\omega'(1-|\xi|)$ for all $(t,\xi)\in[0,\infty)\times\bbD$, and $\calT$ is H\" older continuous on $\overline\Omega$ (see \cite[Lemma 1]{War}), this shows \eqref{7.22}.  Using also that \eqref{1.5} can clearly be solved backwards in time with the same estimate on the boundary approach rate, we find that $\{X_t^x\,|\,x\in\Omega\}=\Omega$, thus finishing the proof of Theorem \ref{T.1.1}(i) for $\omega_0\not\equiv 0$.

If $\omega_0\equiv 0$, then $\omega\equiv 0$ is clearly a Lagrangian solution to \eqref{1.4} on  $(0,\infty)\times\Omega$ with $X^x_t= x$, which satisfies Theorem \ref{T.1.1}(i) except for \eqref{7.22}.  
If $\omega\ge 0$ is a different global weak solution, then the above arguments with time 0 replaced by  any $T'>0$ such that $\omega(T',\cdot)\not\equiv 0$ show that for all $t\in(T',\infty)$ we have $\|\omega(t,\cdot)\|_{L^1}=\|\omega(T',\cdot)\|_{L^1}$.  But then $\|\omega(t,\cdot)\|_{L^1}$ must be constant on the time interval $(T'',\infty)$, where $T''\in[0,\infty)$ is the infimum of times $t$ with $\omega(t,\cdot)\not\equiv 0$ (and that constant is then positive).  This contradicts continuity of $\omega$ as an $L^1(\Omega)$-valued function of time because $\omega(0,\cdot)=\omega_0\equiv 0$.

Theorem~\ref{T.1.1}(ii) follows immediately from Theorem \ref{T.1.1}(i) and Proposition 3.2 in \cite{LacZla}.  We note that the latter result shows that Lagrangian solutions are unique as long as they remain constant near $\partial\Omega$  (more specifically, near the non-$C^{2,\gamma}$ portion of $\partial\Omega$ for some $\gamma>0$).

\section{Proofs of Lemmas \ref{L.3.3}--\ref{L.3.5}}

Let us first state an auxiliary technical result.

 \begin{lemma}\label{L.3.0}
	Let $\beta$ be a (positive) measure on $\bbR$ and let $I:= (\tht^*-2\delta, \tht^*+2\delta)$ for some $\tht^*\in\bbR$ and $\delta\in(0,\frac \pi 2]$.
Let $H \subset 
\bbD$ be an open region such that if $re^{i(\tht^*+\phi)} \in H$ for some  $r\in(0,1)$ and $|\phi|\le\pi$, then $re^{i(\tht^*+\phi')} \in H$ whenever $|\phi'|\le|\phi|$ (i.e., $H$ is symmetric and angularly convex with respect to  the line connecting 0 and $e^{i\tht^*}$).
 If $F:[0,\infty)\to[0,\infty)$ is non-decreasing and convex, then 
\[
	\int_{H} f(z) F\left( g(z) + \frac{1}{\beta(I)} \int_{I} h(|z-e^{i\tht}|) d\beta(\tht) \right) dz \leq  \int_{H} f(z) F\left( g(z) + h(|z-e^{i\tht^*}|) \right) dz
\]
	holds for any non-increasing $h:(0,\infty)\to[0,\infty)$ and non-negative $f,g\in L^1(H)$ such that $f(re^{i(\tht^*+\phi')}) \geq f(re^{i(\tht^*+\phi)})$ and $g(re^{i(\tht^*+\phi')}) \geq g(re^{i(\tht^*+\phi)})$ whenever $r\in(0,1)$ and  $|\phi'|\le |\phi|$.
\end{lemma}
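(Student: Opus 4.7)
The plan is to reduce the double integral to a one-parameter family of comparisons via Jensen's inequality, and then handle each such comparison by a reflection/pairing argument at each fixed radius, combining convexity of $F$ with the angular monotonicity hypotheses on $f$ and $g$.

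First, since $F$ is convex and $d\beta/\beta(I)$ is a probability measure on $I$, Jensen's inequality gives
\[
F\!\left(g(z) + \tfrac{1}{\beta(I)}\int_I h(|z-e^{i\tht}|)\,d\beta(\tht)\right) \le \tfrac{1}{\beta(I)}\int_I F\!\left(g(z) + h(|z-e^{i\tht}|)\right)d\beta(\tht).
\]
Multiplying by $f(z)$, integrating over $H$, and invoking Fubini reduces the lemma to showing, for each individual $\tht \in I$,
\[
\int_H f(z)\, F\!\left(g(z) + h(|z-e^{i\tht}|)\right) dz \le \int_H f(z)\, F\!\left(g(z) + h(|z-e^{i\tht^*}|)\right) dz.
\]

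After rotating so that $\tht^* = 0$, and using the $\psi \mapsto -\psi$ symmetry of $f$, $g$, and $H$ to replace $\tht$ with $|\tht|$, I may assume $\phi_0 := \tht \in [0, 2\delta)\subset[0,\pi)$. In polar coordinates, for each $r \in (0,1)$ the angular slice $\{\psi \in [-\pi,\pi] : re^{i\psi} \in H\}$ is an interval $[-a,a]$ (with $a = a_r$) on which $\tilde f(\psi) := f(re^{i\psi})$ and $\tilde g(\psi) := g(re^{i\psi})$ are even and non-increasing in $|\psi|$; moreover $h(|re^{i\psi}-e^{i\phi_0}|) = \tilde h(\psi-\phi_0)$ where $\tilde h(\psi) := h(|re^{i\psi}-1|)$ is even and non-increasing in $|\psi|$ on $[-\pi,\pi]$. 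Thus it suffices to prove, for each $r$,
\[
\int_{-a}^{a} \tilde f(\psi)\, G(\psi)\,d\psi \le 0, \qquad G(\psi) := F(\tilde g(\psi) + \tilde h(\psi - \phi_0)) - F(\tilde g(\psi) + \tilde h(\psi)),
\]
and a direct comparison of $|\psi|$ and $|\phi_0 - \psi|$ on $[-\pi,\pi]$ shows that $G(\psi) \le 0$ iff $\psi \le \phi_0/2$.

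If $\phi_0 > 2a$ then every $\psi \in [-a,a]$ satisfies $\psi \le a < \phi_0/2$, so $G \le 0$ throughout and the integral is non-positive immediately. Otherwise I split $[-a,a] = [-a, \phi_0-a) \cup [\phi_0-a, a]$; on the leftover interval $\psi < \phi_0 - a \le \phi_0/2$ (since $a \ge \phi_0/2$), so $G \le 0$ and the contribution is non-positive. On the paired region I use the involution $\psi \mapsto \phi_0-\psi$, which exchanges $[\phi_0/2, a]$ and $[\phi_0-a, \phi_0/2]$, together with $\tilde h(\psi - \phi_0) = \tilde h(\phi_0 - \psi)$, to rewrite
\[
\int_{\phi_0 - a}^{a} \tilde f(\psi)\, G(\psi)\,d\psi = \int_{\phi_0/2}^{a} \bigl\{A[F(C+x)-F(C+y)] - B[F(D+x)-F(D+y)]\bigr\}\, d\psi,
\]
where $A := \tilde f(\psi)$, $B := \tilde f(\phi_0-\psi)$, $C := \tilde g(\psi)$, $D := \tilde g(\phi_0-\psi)$, $x := \tilde h(\phi_0-\psi)$, $y := \tilde h(\psi)$. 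For $\psi \in [\phi_0/2, a]$ one has $|\phi_0-\psi| \le |\psi|$, hence $A \le B$, $C \le D$, and $x \ge y$ by the monotonicity hypotheses. Since $F$ is convex and non-decreasing, $F'$ is non-decreasing, so $F(D+x)-F(D+y) \ge F(C+x)-F(C+y) \ge 0$, and $A \le B$ then makes the integrand $\le 0$. Combining the two contributions gives the desired inequality.

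The main obstacle is organizing the geometric bookkeeping in the pairing step, in particular ensuring that every point in $[-a,a]$ without a reflection partner inside the interval lies in the region where $G \le 0$; this is exactly where the hypotheses $\delta \le \pi/2$ and the angular convexity of $H$ enter. Once the pairing is set up, the pointwise inequality for $A, B, C, D, x, y$ is a short consequence of convexity of $F$ together with the arrangement produced by the monotonicity of $f$ and $g$.
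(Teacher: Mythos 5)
Your overall strategy --- Jensen's inequality to reduce to a single point mass at each $\theta\in I$, then radial slicing plus a reflection pairing across $\psi=\phi_0/2$ combined with the ``increasing increments'' property of convex functions --- is sound and matches the spirit of what the paper relies on. (Note the paper gives no proof of its own: it invokes Lemma~4.1 of \cite{HanZla}, remarking that the argument there uses only that $F$ is non-decreasing and convex; your Jensen step and your increment comparison indeed use only those two properties.) There is, however, a genuine gap in the angular bookkeeping. The function $\tilde h(\psi)=h(|re^{i\psi}-1|)$ is $2\pi$-periodic, so whether $\tilde h(\psi-\phi_0)\le\tilde h(\psi)$ is governed by comparing the \emph{circular} distances of $\psi-\phi_0$ and of $\psi$ to $0$, not $|\phi_0-\psi|$ versus $|\psi|$. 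Your claim that $G(\psi)\le 0$ for all $\psi\le\phi_0/2$ is false on $[-\pi,\phi_0/2-\pi)$: for instance, with $\phi_0=\tfrac\pi2$ and $\psi=-\tfrac{4\pi}{5}$ the point $re^{i\psi}$ is at circular distance $\tfrac{7\pi}{10}$ from $e^{i\phi_0}$ but $\tfrac{4\pi}{5}$ from $1$, so $\tilde h(\psi-\phi_0)\ge\tilde h(\psi)$ and $G(\psi)\ge 0$. Consequently the step ``on the leftover interval $[-a,\phi_0-a)$ we have $G\le 0$'' fails whenever the slice half-width satisfies $a>\pi-\phi_0/2$, which the hypotheses permit (e.g.\ $H=\bbD$, where $a=\pi$). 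This is precisely the bookkeeping you flagged as the main obstacle, and as written it does not close: the hypothesis $\delta\le\pi/2$ only controls $\phi_0$, not $a$.

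The repair is mechanical: perform the reflection $\rho(\psi)=\phi_0-\psi$ modulo $2\pi$ on the whole circle. The set $\{G\ge 0\}$ is the arc of points circle-closer to $\phi_0$ than to $0$, namely $(\phi_0/2,\pi]\cup[-\pi,\phi_0/2-\pi)$ in your coordinates; for every $\psi$ in that arc one checks that the circular distance of $\rho(\psi)$ to $0$ is at most that of $\psi$, so $\rho$ maps $\{G\ge 0\}$ intersected with the slice into the slice, the inequalities $A\le B$, $C\le D$, $x\ge y$ all hold, and your convexity computation disposes of each pair; the unpaired remainder then genuinely lies in $\{G\le 0\}$. With that modification your proof is correct; your linear (non-periodic) reflection already suffices when $a\le\pi-\phi_0/2$, which happens to cover all of the paper's actual applications of the lemma, where $H$ is a small set near $\partial\bbD$.
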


The proof of this result is identical to that of Lemma 4.1 in \cite{HanZla}, which was stated with $F(s)=s^\alpha$ for some $\alpha\ge 1$, because the only properties of $F$ used in it were that it is non-decreasing and convex.  We will be using it here with $F(s):=e^s$,  $g\equiv 0$, and  $h(s):= 2\beta(I) \ln_+\frac{2}{s} $, so that for any $\beta,I,H,f$ as above we have
\begin{equation}\label{2.1}
\int_{H} f(z) \exp \left( -  2 \int_{I} \ln |z-e^{i\tht}|  d\beta(\tht) \right) dz \leq  \int_{H} f(z) |z-e^{i\tht^*}|^{ - 2\beta(I)} dz.
\end{equation}	

Since Lemmas \ref{L.3.3}--\ref{L.3.5} are all stated at a single time $t$, we will drop $t$ from our notation in the proofs below.  Hence we will have $\omega(x), \Psi(x)$, and $R(z)$.  For $z\in\bbD$ we will also denote
\[
\Lambda(z) := \det D\calS(z) \, |\omega(\calS(z))|\ge 0.
\]
We note that
\beq\lb{6.0}
\int_\bbD \Lambda(z) dz\le \|\omega\|_{L^\infty} \int_\bbD \det D\calS(z)dz = |\Omega|\, \|\omega\|_{L^\infty},
\eeq
and that constants $C_1,C_2,\dots$ below will always be allowed to depend (only) on $\Omega$.

\subsection{Proof of Lemma \ref{L.3.4}}
We have
	    \begin{align}\label{3.1}
	    	\frac{|\xi-z|^2}{|\xi-z^*|^2|z|^2} &=1-\frac{|\xi z-z^*z|^2-|\xi-z|^2}{|\xi-z^*|^2|z|^2} \notag \\
	    	&=1-\frac{(|\xi|^2|z|^2-2 {\rm Re}\, (\xi \bar z) +1)- (|\xi|^2- 2 {\rm Re}\,(\xi \bar z) +|z|^2)}{|\xi-z^*|^2|z|^2}  \notag \\
	    	&= 1-	\frac{(1-|\xi|^2)(1-|z|^2)}{|\xi-z^*|^2|z|^2}
	    \end{align}
for $\xi,z\in \mathbb{D}$ with $z\neq 0$,  which also means that  $\frac{|\xi-z|^2}{|\xi-z^*|^2|z|^2}\in (0,1)$ when  $z\neq 0,\xi$.
Hence
	\begin{align*}
	-\ln	\frac{|\xi-z|}{|\xi-z^*||z|} = -\frac{1}{2} \ln\left(1-\frac{(1-|\xi|^2)(1-|z|^2)}{|\xi-z^*|^2|z|^2}\right) 
	\geq \frac{1}{2} \frac{(1-|\xi|^2)(1-|z|^2)}{|\xi-z^*|^2|z|^2},
	\end{align*}
and so for each $\xi \in \mathbb{D}$ we have
	\begin{align*}
		\Psi(\calS(\xi)) &\geq \frac{1}{4\pi} \int_{\mathbb{D}} \frac{(1-|\xi|^2)(1-|z|^2)}{|\xi-z^*|^2|z|^2}  \Lambda(z)dz
		\geq \frac{1-|\xi|}{4\pi}\int_{\mathbb{D}} \frac{(1-|z|)}{|\xi-z^*|^2|z|^2}  \Lambda(z) dz.
	\end{align*}
Given any $z,\xi\in\bbD$, let $M:=\max\{|z-\xi|, 1-|\xi|\}$.
Then $1-|z|\le 1-|\xi|+|z-\xi|\le 2M$, so
\[
|\xi-z^*| \, |z| \leq |\xi-z| + |z-z^*| \, |z|  = |z-\xi| + 1-|z|^2 \leq |z-\xi| + 2(1-|z|) \leq 5M
\]
when $z\neq 0$, and the result follows. 

\subsection{Proof of Lemma \ref{L.3.3}}
Identity \eqref{3.1} and $-\ln(1-r) \leq (\frac{r}{1-r})^{\frac{1}{2}}$ for $r\in[0,1)$ (equality holds for $r=0$ and the right-hand side has a larger derivative on $(0,1)$) show that
	\begin{align*}
	- \ln 	\frac{|\xi-z|}{|\xi-z^*||z|} 
	\leq \frac{1}{2} \left( \frac{ \frac{(1-|\xi|^2)(1-|z|^2)}{|\xi-z^*|^2|z|^2}    }{  \frac{|\xi-z|^2}{|\xi-z^*|^2|z|^2}  }   \right)^{\frac{1}{2}} 
	\leq  \frac{(1-|\xi|)^{\frac{1}{2}} (1-|z|)^{\frac{1}{2}}}{|\xi-z|}.
	\end{align*}
Hence it suffices to show that there is $C_1 >0$ such that
\beq\lb{6.1}
\int_{\mathbb{D}} \frac{ (1-|z|)^{\frac{1}{2}} }{|z-\xi|} \Lambda(z) dz  \leq  C_1 \|\omega\|_{L^\infty} (1-|\xi|)^{2\hat{\alpha}-\frac 12},
\eeq
where $\hat{\alpha}: = \min\{1-\alpha_*, \frac 14\}$ (note that $2\hat{\alpha}-\frac 12\le 0$).  From \eqref{6.0} we see that it in fact suffices to replace $\bbD$ by $A_1: = B(\xi,\delta)\cap \bbD$   in \eqref{6.1}.

Let us decompose $A_1$ into $A_2: = B(\xi,\eps)\cap A_1 $ with  $\eps: = \frac{1-|\xi|}{2} $, $A_3: = B(\tilde{\xi},\eps) \cap A_1$ with $\tilde{\xi} := \frac{\xi}{|\xi|}$, and $A_4 := A_1 \backslash (A_2 \cup A_3)$. 
Now  Lemma \ref{L.3.1} and \eqref{2.1} with $H:=A_1$, $I:=(\arg(\xi)-2\delta,\arg(\xi)+2\delta)$,  $f(z):=\frac{(1-|z|)^{1/2 }}{|z-\xi|}$, and $\beta:=\sum_{\tht_j \in I} \alpha_j^+ \delta_{\theta_j}$, where $\delta_{\theta_j}$ is the Dirac mass at $\theta_j$, yield
    \begin{align*}
    	\int_{A_1} \frac{(1-|z|)^{\frac{1}{2}}}{|z-\xi|} \Lambda(z) & dz  \leq C_2 ||\omega||_{L^\infty} \int_{A_1} \frac{(1-|z|)^{\frac{1}{2} }}{|z-\xi|} \Pi_{\tht_j \in I} |z-e^{i\theta_j}|^{-2\alpha^+_j} dz \\ 
	\leq & C_2 ||\omega||_{L^\infty} \int_{A_1} \frac{(1-|z|)^{\frac{1}{2} }}{|z-\xi|} |z-\tilde{\xi}|^{-2\alpha_*} dz \\
    	\leq &C_2 ||\omega||_{L^\infty} \left( \int_{A_2} \frac{\eps^{\frac 12-2\alpha_*}}{ |z-\xi|}dz + \int_{A_3} \frac {|z-\tilde{\xi}|^{\frac12 -2\alpha_*}} {\eps}dz  + \int_{A_4} 3^3  |z-\xi|^{-\frac 12-2\alpha_*}dz \right) \\
    	\leq &C_3 ||\omega||_{L^\infty} \eps^{\frac 32-2\alpha_*} 
    	\leq  2C_3 ||\omega||_{L^\infty} (1-|\xi|)^{2\hat{\alpha}-\frac 12} 
    \end{align*}
because \eqref{1.11}  shows that $\sum_{\tht_j \in I} \alpha^+_j \le \alpha_*<1$.  
This therefore finishes the proof of \eqref{6.1}.

\subsection{Proof of Lemma \ref{L.3.5}}
First integrate  over $A_0 : = \mathbb{D} \backslash  B(\xi,\delta)$.  Then \eqref{2.3}, \eqref{6.0}, and 
\beq\lb{6.3}
|z-\tilde{z}^*| \ge |\tilde{z}^*|-1 \ge \frac {|\tilde{z}-\tilde{z}^*| }2 \ge 1-|\tilde z|
\eeq
for any $z,\tilde z\in\bbD$ yield
    \begin{align*}
    	\int_{A_0} \frac{|R(z)|}{|z-\xi||z-\xi^*|} dz 
    	&\leq \frac 1{\delta^2} 	\int_{A_0}   \int_{\mathbb{D}} \frac{|\tilde{z}-\tilde{z}^*|}{|z-\tilde{z} |  |z-\tilde{z}^*|} \Lambda(\tilde{z}) d\tilde{z}dz \\
    	&\leq   \frac 2{\delta^2}  \int_{\mathbb{D}}\int_{A_0} \frac{dz}{|z-\tilde{z} | }  \Lambda(\tilde{z}) d\tilde{z} \\
    	&\leq   \frac {4\pi}{\delta^2}    \int_{\mathbb{D}} \Lambda(\tilde{z}) d\tilde{z} \\
	& = \frac{4\pi  |\Omega|}{\delta^2} \, ||\omega||_{L^\infty} .
    \end{align*}
So it remains to integrate over $A_1:=B(\xi,\delta)\cap\bbD$.   From \eqref{6.3}, $|\xi|-\delta\ge \frac 58$, and \eqref{6.0} we have
    \beq \lb{6.2}
    	\int_{A_1} \frac{1}{|z-\xi||z-\xi^*|}   \int_{B(0,1/2)} \frac{|\tilde{z}-\tilde{z}^*|}{|z-\tilde{z} |  |z-\tilde{z}^*|  } \Lambda(\tilde{z})  d\tilde{z}dz \le  C_{1} |\ln (1-|\xi|)| \, ||\omega||_{L^{\infty}},
    \eeq
 where we also used that  with $B_\xi := B(\xi,\frac{|\xi-\xi'|}{2})\cap\bbD$ and $B_{\xi'} := B(\xi',\frac{|\xi-\xi'|}{2})\cap\bbD$ we have
    \beq \lb{6.4}
  	\int_{\mathbb{D}} \frac{dz}{|z-\xi||z-\xi'|} 
  	\leq  3\int_{\mathbb{D} \setminus (B_\xi\cup B_{\xi'}) } \frac{dz}{|z-\xi|^2} +	\frac{4}{|\xi-\xi'|} \int_{ B_\xi} \frac{dz}{|z-\xi|} \le 6\pi \ln_+ \frac 1{|\xi-\xi'|}+50
\eeq
  for any $\xi,\xi'\in\bbC$.
  
We now let $\eps: = 1-|\xi| $ and split $A_1$ into $A_2:=  B(\xi,\frac{\eps}{4})$ and $A_3:=A_1\setminus A_2$.  We start with $A_2$, and let $E_1 := B(\xi,\frac\eps 2)$ and $E_2 := \mathbb{D} \backslash (B(0,\frac{1}{2}) \cup B(\xi,\frac\eps 2))$.  We also denote $M(\xi,z) := \max\{|z-\xi|, 1-|\xi|\}$.  When $(z,\tilde z)\in A_2\times E_1$, then \eqref{6.3}, $|z-\xi^*|\ge\eps$, and     \eqref{6.4} show that
    \begin{align*}
    		\int_{A_2} \frac{1}{|z-\xi||z-\xi^*|} \int_{E_1} \frac{|\tilde{z}-\tilde{z}^*|}{|z-\tilde{z} |  |z-\tilde{z}^*|} \Lambda(\tilde{z}) d\tilde{z}dz 
    		& \le \frac 2{\eps} \int_{E_1} \Lambda(\tilde{z}) \int_{A_2} \frac{ dz}{|z-\xi||z-\tilde{z} |}   d\tilde{z} \\
    	    & \leq \frac{C_{2}}\eps \int_{E_1} \Lambda(\tilde{z}) \, |\ln |\tilde{z}-\xi| | \, d\tilde{z}.
    \end{align*}
%
%
%
From Lemma \ref{L.3.1} and \eqref{1.11} we see that $\det D\calS(\tilde{z}) \leq C_3(1-|\tilde z|)^{-2}$ for some $C_3$ and  all $\tilde{z} \in \bbD$, hence
\[
\int_{B(\xi,\eps^2)} \Lambda(\tilde{z}) \, |\ln |\tilde{z}-\xi| | \, d\tilde{z} \le 4C_3\eps^{-2} ||\omega||_{L^\infty}\int_{B(\xi,\eps^2)}  |\ln |\tilde{z}-\xi| | \, d\tilde{z} \le C_4 ||\omega||_{L^\infty} \eps^2 |\ln\eps|.
\]
From the last two estimates and $M(\xi,\tilde z) =\eps\le 2(1-|\tilde z|)$ for $\tilde z\in E_1$ it now follows that
\begin{align*}
\int_{A_2} \frac{1}{|z-\xi||z-\xi^*|} \int_{E_1} \frac{|\tilde{z}-\tilde{z}^*|}{|z-\tilde{z} |  |z-\tilde{z}^*|} & \Lambda(\tilde{z}) d\tilde{z}dz   \le C_2C_4 ||\omega||_{L^\infty} \eps |\ln\eps| + \frac{C_{2}|\ln \eps | }\eps \int_{E_1} \Lambda(\tilde{z}) d\tilde{z}
\\ & \le C_5  |\ln \eps|\left(\int_{E_1}\frac{1-|\tilde{z}|}{M(\xi,\tilde{z})^2} \Lambda(\tilde{z}) d\tilde{z}+||\omega||_{L^\infty} \right).
\end{align*}
%

Moreover, for all $(z,\tilde z) \in A_2\times E_2$ we have $|z-\tilde{z}^*|\ge |z-\tilde{z}| \geq  \frac{|\tilde{z}-\xi|}2 \geq \frac{1-|\xi|}{4}$ and $|\tilde z-\tilde{z}^*| \leq 3(1-|\tilde{z}|)$, therefore
   \begin{align*}
   	\int_{A_2} \frac{1}{|z-\xi||z-\xi^*|}  \int_{E_2} \frac{|\tilde{z}-\tilde{z}^*|}{|z-\tilde{z} |  |z-\tilde{z}^*|} \Lambda(\tilde{z}) d\tilde{z}dz
   	&\le 48 \int_{E_2}  \frac{1-|\tilde{z}|}{M(\xi,\tilde{z})^2  } \Lambda(\tilde{z})  d\tilde z \int_{A_2} \frac{dz}{|z-\xi||z-\xi^*|}\\
   	&\le C_6 |\ln(1-|\xi|)| \int_{E_2}  \frac{1-|\tilde{z}|}{M(\xi,\tilde{z})^2  } \Lambda(\tilde{z}) d\tilde z,
   \end{align*}   		
    where we also used \eqref{6.4}. 
    The last two estimates and \eqref{6.2}  show that
    \[
    \int_{A_2} \frac{|R(z)|}{|z-\xi||z-\xi^*|} dz \le 
    (C_1+C_5+C_6)  |\ln(1-|\xi|)|\left(\int_{\bbD}\frac{1-|z|}{M(\xi,z)^2} \Lambda(z) dz +||\omega||_{L^\infty} \right),
    \]
   so it remains to integrate over $A_3$.

Let $F_1 := B(\xi, \frac{\eps}{8})$,  $F_2 := \mathbb{D} \setminus ( B(0,\frac{1}{2}) \cup B(\xi,2\delta))$, and $F_3 := (B(\xi,2\delta) \cap\bbD) \setminus B(\xi, \frac{\eps}{8})$.
Then for all $(z,\tilde z) \in A_3\times F_1$ we have $|z-\tilde{z}^*|\ge |z-\tilde{z}| \geq \frac{1-|\xi|}{8} \geq  |\tilde z-\xi|$ and $|\tilde z-\tilde{z}^*| \leq 3(1-|\tilde{z}|)$, which together with \eqref{6.4} yields
  	\begin{align*}
  	\int_{A_3} \frac{1}{|z-\xi||z-\xi^*|}  \int_{F_1}  \frac{|\tilde{z}-\tilde{z}^*|}{|z-\tilde{z} |  |z-\tilde{z}^*|}  \Lambda(\tilde{z}) d\tilde{z}dz
  	&\leq 192  \int_{F_1} \frac{1-|\tilde{z}|}{M(\xi,\tilde{z})^2}  \Lambda(\tilde{z}) d\tilde{z}  \int_{A_3} \frac{dz}{|z-\xi^*||z-\xi|}  \\
  	&\leq C_7 |\ln(1-|\xi|)| \int_{F_1}  \frac{1-|\tilde{z}|}{M(\xi,\tilde{z})^2}  \Lambda(\tilde{z})  d\tilde{z}.
  	\end{align*}
%
And from \eqref{6.3}, \eqref{6.0}, and \eqref{6.4} we obtain
%
   \begin{align*}
   	\int_{A_3} \frac{1}{|z-\xi||z-\xi^*|}  \int_{F_2}\frac{|\tilde{z}-\tilde{z}^*|}{|z-\tilde{z}|  |z-\tilde{z}^*|}  \Lambda(\tilde{z})  d\tilde{z}dz
   	&\le \frac 2\delta \int_{F_2} \Lambda(\tilde{z}) d\tilde{z} \int_{A_3} \frac{dz}{|z-\xi||z-\xi^*|}   \\
   	&\leq C_8   |\ln(1-|\xi|)| \,||\omega||_{L^{\infty}}.
   \end{align*}
   
For the integral involving $(z,\tilde z)\in A_3\times F_3$, let  $F_4 :=  F_3 \cap  B(0,1-\eps^{\frac{1}{1-\alpha_*}})$ 
and for $\tilde z\in F_3$ let $A_{\tilde{z}} :=B(\tilde{z},\frac{|\tilde{z}-\xi|}{2}) \cap A_3$.  
From $|\xi|,|{\tilde z}|\ge \frac 12$ and \eqref{2.3} we get
\[
|\tilde{z}-\xi^*|\le |\tilde{z}-\tilde z^*| + 4|\tilde{z}-\xi| \le |\tilde{z}-\tilde z^*| + 8|\tilde{z}-z| \le 10| z-\tilde z^*|
\]
when also $z \notin  A_{\tilde{z}}$.  This, \eqref{6.4},  $|\tilde{z}-\tilde{z}^*| \leq 3(1-|\tilde{z}|)$, and $|\tilde{z} - \xi^*| \ge |\tilde{z}-\xi| \geq \frac{1-|\xi|}{8}$ for $\tilde z\in F_3$, and $|\tilde{z} - \xi^*| \geq \frac{|\tilde{z}-\xi|}{2}$ show that
  \begin{align*}
\int_{A_3} \frac{1}{|z-\xi||z-\xi^*|}  \int_{F_4} & \frac{|\tilde{z}-\tilde{z}^*|}{|z-\tilde{z} | |z-\tilde{z}^*|} \Lambda(\tilde{z}) d\tilde{z} dz \\
 \leq  & 4 \int_{F_4}\int_{A_{\tilde{z}}} \frac{1}{|\tilde{z}-\xi||\tilde{z}-\xi^*|}    \frac{|\tilde{z}-\tilde{z}^*|}{|z-\tilde{z} | |z-\tilde{z}^*|}  \Lambda(\tilde{z}) dzd\tilde{z} \\
& + 20 \int_{F_4}\int_{A_3 \setminus A_{\tilde{z}}} \frac{1}{|z-\xi||z-\xi^*|}  \frac{|\tilde{z}-\tilde{z}^*|}{|\tilde{z}-\xi| |\tilde{z}-\xi^*|}   \Lambda(\tilde{z}) dzd\tilde{z} \\
\leq & C_9 \int_{F_4} \frac{|\tilde{z}-\tilde{z}^*|}{|\tilde{z}-\xi||\tilde{z}-\xi^*|}    \Lambda(\tilde{z})\left(|\ln(1-|\tilde{z}|)| + |\ln(1-|\xi|)|\right)d\tilde{z} \\
\leq & C_{10} |\ln(1-|\xi|)| \int_{F_4} \frac{1-|\tilde{z}|}{M(\xi,\tilde{z})^2}    \Lambda(\tilde{z})d\tilde{z}.
\end{align*}
    
 Finally, let $F_5:=F_3\setminus F_4$.  From \eqref{6.3}, \eqref{6.4}, Lemma \ref{L.3.1}, and \eqref{2.1} with $H:=F_5$, $I:=(\arg(\xi)-3\delta,\arg(\xi)+3\delta)$,  $f\equiv 1$, and $\beta:=\sum_{\tht_j \in I} \alpha_j^+ \delta_{\theta_j}$ we obtain
    \begin{align*}
    \int_{A_3} \frac{1}{|z-\xi||z-\xi^*|} & \int_{F_5}  \frac{|\tilde{z}-\tilde{z}^*|}{|z-\tilde{z} | |z-\tilde{z}^*|} \Lambda(\tilde{z}) d\tilde{z}     dz \\
     &\leq 2 \int_{F_5} \Lambda(\tilde{z}) \int_{A_3}\frac{dz}{|z-\xi||z-\xi^*| |z-\tilde{z}|} d\tilde{z}   \\
    &\leq 8 \int_{ F_5}  \Lambda(\tilde{z}) \left(\int_{A_{\tilde{z}} } \frac{dz}{|\tilde{z}-\xi|^2|\tilde{z}-z | } + \int_{A_3\setminus A_{\tilde{z}}   }\frac{dz}{|z-\xi||z-\xi^*||\tilde{z}-\xi|}     \right) d\tilde{z} \\
    &\leq C_{11} \int_{ F_5} \Lambda(\tilde{z}) \left( \frac{1}{|\tilde{z}-\xi|}  +   \frac{ |\ln(1-|\xi|)|}{|\tilde{z}-\xi|}   \right)  d\tilde{z} \\
    &\leq C_{12} \frac{ |\ln(1-|\xi|)|}{\eps}  ||\omega||_{L^{\infty}} \int_{F_5}   \Pi_{\tht_j \in I} |\tilde z-e^{i\theta_j}|^{-2\alpha_j^+}d\tilde{z} \\
    &\leq C_{13} \frac{ |\ln(1-|\xi|)|}{\eps}  ||\omega||_{L^{\infty}} \int_{F_5} \left|\tilde{z}-\frac{\xi}{|\xi|} \right|^{-2\alpha_*}d\tilde{z} \\
    &\leq C_{14}|\ln(1-|\xi|)| \, ||\omega||_{L^{\infty}},
     \end{align*}
     where in the last inequality we used that $|F_5|\le \eps^{\frac 1{1-\alpha_*}}$, which is less than the area of a disc with radius $\eps^{\frac 1{2-2\alpha_*}}$.
%
%
%
%
%
%
Combining the above estimates and \eqref{6.2} yields
\[
    \int_{A_3} \frac{|R(z)|}{|z-\xi||z-\xi^*|} dz \le 
    (C_1+C_7+C_8+C_{10}+C_{14})  |\ln(1-|\xi|)|\left(\int_{\bbD}\frac{1-|z|}{M(\xi,z)^2} \Lambda(z) dz +||\omega||_{L^\infty} \right),
\]
and the result follows. 

%
%

\section{Proof of Lemma \ref{L.3.6}}

We see from \eqref{2.2}, a change of variables in the integral from \eqref{7.13}, and \eqref{7.18}  that we need to show boundedness and continuity of $R$ and 
\[
Q(t,\xi) :=  \int_{\bbD}  \left(\frac{z-\xi}{|z-\xi|^2}  - \frac{z-\xi^*}{|z-\xi^*|^2}\right) \cdot R(t,z)  \, \omega(t,\calS(z))  dz
\]
on $[0,\infty)\times K$ for any compact $K\subseteq\bbD$, as well as that $\partial_t \Psi(t,x)= - \frac 1{2\pi}Q(t,\calT(x))$ holds for each $(t,x)\in[0,\infty)\times\Omega$.
%
%
%

So fix any such $K$	 and let $d :=  \text{dist}(K, \partial \mathbb{D}) > 0$, then fix any $(t,\xi) \in [0,\infty) \times K$ and let $B: =  B(\xi,\frac{d}{2})$ and $B' := \overline{ B(\xi,\frac{d}{4})}$. With $C_d:=\sup_{|z|\le 1-d/2} \det D\calS(z)$, and using  \eqref{2.3}, $|w-z^*|\ge|w-z|$ for all $z,w\in\bbD$, \eqref{6.4}, and \eqref{6.0}, we obtain for any  $(t',\xi') \in [0,\infty) \times B'$,
\begin{align*}
| R(t,\xi) - 	R(t,\xi')|	
\leq 
		&  ||\omega||_{L^\infty} \left( \int_{B} + \int_{\mathbb{D} \backslash B} \right)\left(\frac{|\xi-\xi'|}{|\xi-z||\xi'-z|}    + \frac{|\xi-\xi'|}{|\xi-z^*||\xi'-z^*|} \right)\det D\calS(z) dz \\
		\leq & 2 ||\omega||_{L^\infty} |\xi-\xi'| \left(6\pi C_d\ln_+\frac{1}{|\xi-\xi'|} +50C_d+\frac {8|\Omega|}{d^2} \right)\\
\end{align*}
and (using also $|z-z^*|\le 2|\xi'-z^*|$ and H\" older's inequality)
\begin{align} 
 |R(t,\xi') - 	R(t',\xi')|	
 \leq &  \int_{\mathbb{D}} \frac{|z-z^*|}{|\xi'-z||\xi'-z^*|} \det D\calS(z) |\omega(t,\calS(z)) - \omega(t',\calS(z)) | dz  \notag\\	
		\leq & 2\left(\int_{\mathbb{D}} |\xi'-z|^{-\frac 32} \det D\calS(z) dz \right)^{\frac{2}{3}} ||\omega(t,\cdot)-\omega(t',\cdot)||_{L^3(\Omega)}.  \lb{7.12}
\end{align}
(Note also that the first of these estimates and \eqref{7.9} below prove \eqref{1.6}.)
Since the last integral is bounded in $\xi'\in B'$ by Lemma \ref{L.3.1} and \eqref{6.0}, and $\omega$ is continuous as an $L^p(\Omega)$-valued function of $t\in[0,\infty)$ for any $p\in[1,\infty)$ due to boundedness of $\omega$, local boundedness of $u$, and \eqref{7.8}, these two estimates show that $R$ is continuous at $(t,\xi)$.

Boundedness of $R$ on $[0,\infty) \times K$ follows from the estimate
    \beq \lb{7.9}
    |R(t,\xi)| \leq C_{\Omega} ||\omega||_{L^\infty} (1-|\xi|)^{1-2\alpha_*}
    \eeq
for all $(t,\xi)\in[0,\infty)\times \bbD$, with $\alpha_*$ from \eqref{1.11} and some $\Omega$-dependent constant $C_{\Omega}$.  To obtain it, first note that $|z-z^*|\le 2|\xi-z^*|$ and \eqref{6.0} yield (with $\delta$ from \eqref{1.11})	
		\begin{align*}
 \int_{\Omega \backslash B(\xi,\delta)} \frac{|z-z^*|}{|\xi-z||\xi-z^*|} \det D\calS(z) dz 
		\leq \frac 2\delta  \int_{\Omega \backslash B(\xi,\delta)}  \det D\calS(z) dz 
		\leq  \frac {2|\Omega|}\delta.
		\end{align*}
Then use  Lemma \ref{L.3.1}, and \eqref{2.1} with $H:=B(\xi,\delta)$, $I:=(\arg(\xi)-2\delta,\arg(\xi)+2\delta)$,  $f(z):=\frac{1}{|\xi-z|}$, and $\beta:=\sum_{\tht_j \in I} \alpha_j^+ \delta_{\theta_j}$ to get (with  $\eps:=\frac{1-|\xi|}2$ and $\tilde{\xi} = \frac{\xi}{|\xi|}$)
\begin{align*}
 \int_{B(\xi,\delta)} & \frac{|z-z^*|}{|\xi-z||\xi-z^*|}  \det D\calS(z) dz 
		 \leq  C' \int_{ B(\xi,\delta)} \frac{|\tilde{\xi}-z|^{-2\alpha_*}}{|\xi-z|}  dz \\
		&\leq C' \left(  \int_{B(\xi,\eps)}  \frac{\eps^{-2\alpha_*}}{|\xi-z|} dz  
		+ \int_{B(\tilde{\xi},\eps)}\frac{|\tilde{\xi}-z|^{-2\alpha_*}}{\epsilon}  dz
		+ 9 \int_{ B(\xi,\delta) \backslash (B(\xi,\eps) \cup B(\tilde{\xi},\eps) )} {|\xi-z|^{-1-2\alpha_*}}  dz   \right) \\
		&\leq C''(1-|\xi|)^{1-2\alpha_*}
\end{align*}		
with some $\Omega$-dependent constant $C',C''$ because $\sum_{\tht_j \in I} \alpha^+_j \le \alpha_*<1$ by \eqref{1.11}.   The last two estimates now imply \eqref{7.9}.  

%
    
Let us now turn to $Q$.
Fix any $K$ as above, then fix any $(t,\xi) \in [0,\infty) \times K$ and let $d,B,B'$ be as above (without loss assume that $d\le \frac 14$). Then for any $(t',\xi') \in [0,\infty) \times B'$ we have from  \eqref{2.3},
\begin{align*}
| Q(t,\xi) - 	Q(t,\xi')|	
\leq 
		  ||\omega||_{L^\infty} \int_\mathbb{D} \left(\frac{|\xi-\xi'|}{|\xi-z||\xi'-z|}    + \frac{|\xi^*-\xi'^*|}{|\xi^*-z||\xi'^*-z|} \right) |R(t,z)| dz,
\end{align*}
where the second fraction is just $\frac 1{|\xi^*-z|}$ when $\xi'=0$ and $\frac 1{|\xi'^*-z|}$ when $\xi=0$.  Using \eqref{2.3}, splitting the integration to $z\in {B}$ and $z\in \mathbb{D} \backslash B$, and applying \eqref{7.9} and \eqref{6.4} yields
\[
| Q(t,\xi) - 	Q(t,\xi')|	 \le C' ||\omega||_{L^\infty} |\xi'-\xi| \left( d^{1-2\alpha_*} \left(1+\ln_+\frac{1}{|\xi-\xi'|}  \right) +d^{-2}\right)
\]
for some $\Omega$-dependent constant $C'$.  Next, we have
 \begin{align*}
| Q(t,\xi') - 	Q(t',\xi')| \le & ||\omega||_{L^\infty} \int_{\bbD} \frac{|\xi'-\xi'^*|}{|\xi'-z| \, |\xi'^*-z|}  | R(t,z) - R(t',z) |  dz \\
& + \int_{\bbD} \frac{|\xi'-\xi'^*|}{|\xi'-z| \, |\xi'^*-z|}  |R(t',z)| \, |\omega(t,\calS(z))- \omega(t',\calS(z)) | dz.
  \end{align*}
  Splitting the first integration into $z\in{B'}$ and $z\in \mathbb{D} \backslash B'$,  and then using $|\xi'-\xi'^*|\le 2  |\xi'^*-z|$,  \eqref{7.12}, and \eqref{7.9} shows that the first integral is bounded above by
    \[
    C_d  ||\omega(t,\cdot)-\omega(t',\cdot)||_{L^3(\Omega)} + \frac 4d \int_{\bbD} | R(t,z) - R(t',z) |  \,dz
\]
 for some $(\Omega,d)$-dependent constant $C_d$.  This converges to 0 as $t'\to t$ by  continuity of $\omega:[0,\infty)\to L^3(\Omega)$, together with \eqref{7.12} and integrability of the right-hand side of \eqref{7.9}.
 
Using $|\xi'-\xi'^*|\le 2  |\xi'^*-z|$, \eqref{7.9}, and Lemma \ref{L.3.1}, the second integral is bounded by
    \begin{align*}
C'  \left[ \int_{\bbD}  \left(\frac{(1-|z|)^{1-2\alpha_*}}{|\xi'-z|\det D\calS(z)^\frac{1}{p}} \right)^q  dz\right]^{\frac{1}{q}}   &\left(\int_{\bbD} \det D\calS(z)|\omega(t,\calS(z)) - \omega(t',\calS(z)) |^p dz     \right)^{\frac{1}{p}} \\
& \le C_d ||\omega(t,\cdot)-\omega(t',\cdot)||_{L^p(\Omega)}
  \end{align*}
  for some $\Omega$-dependent $C'$ and $(d,\Omega)$-dependent  $C_d$, provided $p\in(2,\infty)$ is large enough so that with $q:=\frac p{p-1}$ we have $(1-2\alpha_*-\frac1p {\sum_{j} \alpha_j^+})q>-1$.  
  The above estimates thus together show that $Q$ is continuous at $(t,\xi)$.

We can also use  \eqref{2.3}, $|\xi-\xi^*|\le 2  |\xi^*-z|$, and \eqref{7.9} to get
\beq\lb{boundQ}
|Q(t,\xi)| \le  2C_\Omega  ||\omega||_{L^\infty}^2 \int_{\bbD}  \frac{(1-|z|)^{1-2\alpha_*}}{|\xi-z|}   dz
\eeq
for all $(t,\xi)\in[0,\infty)\times\bbD$, showing boundedness of $Q$ on $[0,\infty)\times K$ for each compact $K\subseteq\bbD$.
  
Hence it remains to show  $\partial_t  \Psi(t,x)=- \frac 1{2\pi} Q(t,\calT(x))$ pointwise, which will follow from
	\beq\lb{8.1}
	-\frac 1{2\pi}\int_{t_0}^{t_1} Q(t,\calT(x_0))dt = \Psi(t_1,x_0) - \Psi(t_0,x_0)
	\eeq
	for all $0\le t_0<t_1$ and $x_0\in\Omega$ because $Q$ is continuous.  So fix any such $(t_0,t_1,x_0)$.

Let
\[
\phi(x) :=  - \frac 1{2\pi} \ln \frac{|\calT(x_0)-\calT(x)|}{|\calT(x_0)-\calT(x)^*||\calT(x)|} 
= - \frac 1{2\pi}  \ln \frac{|\calT(x)-\calT(x_0)|}{|\calT(x)-\calT(x_0)^*||\calT(x_0)|}
\]
(so $\Psi(t_j,x_0)= \int_\Omega \phi(x) \omega(t_j,x)dx$ for $j=0,1$) and
\[
\psi(x) := \nabla \phi(x) = - \frac 1{2\pi} D\calT(x)^T \left( \frac{\calT(x)-\calT(x_0)}{|\calT(x)-\calT(x_0)|^2} - \frac{\calT(x)-\calT(x_0)^*}{|\calT(x)-\calT(x_0)^*|^2} \right)
\]
for each $x\in\Omega$ (recall \eqref{9.1}).  Also, for each $r\in(0,\frac{t_1-t_0}2)$ let $g_r\in C_c^{\infty}([0,\infty))$ be such that
\[
 \chi_{[t_0+r,t_1-r]} \le g_r \le  \chi_{(t_0,t_1)}
\]
and $g_r$ is non-increasing on $[0,t_1]$ and non-decreasing on $[t_1,\infty)$;
and for each $h\in(0,1]$ let $f_h\in C^\infty([0,\infty))$ be such that
\begin{enumerate}
	\item $f_h(x) = 0$ for  $x \in [0,\frac{h}{3}]$,
	\item $f_h(x) = x$ for $x \in [h,\frac{1}{h}]$,
	\item $f_h(x) = \frac{1}{h}+h$ for $x \in [\frac{1}{h}+h,\infty)$,
	\item $0\le f_h'(x) \leq 2$ for $x \in [0,\infty)$.
\end{enumerate}

Now for any $h,r\in(0,\min\{1,\frac{t_1-t_0}2\})$ and $(t,x)\in [0,\infty) \times \Omega$ let 
\[
\varphi_{r,h}(t,x) := g_r(t) f_h(\phi(x)).
\]
Then clearly $\varphi_{r,h}\in C^{\infty}_c([0,\infty) \times \Omega)$ and $\varphi_{r,h}(0,\cdot)\equiv 0$, so plugging it into \eqref{1.222} yields 
\[
\int_{0}^\infty  \int_{\Omega} \omega(t,x)  g_r(t) f_h'(\phi(x))\, u(t,x)\cdot \psi(x) dxdt  + \int_{0}^\infty \int_{\Omega} \omega(t,x) g_r'(t) f_h(\phi(x))  dxdt =0.
\]
Since $\omega(t,x) g_r(t) f_h'(\phi(x)) \psi(x)$ is a bounded function and $u\in L^{\infty}((0,\infty); L^2(\Omega))$, we can use the dominated convergence theorem to pass to the limit $r\to 0$ and obtain
\[
\int_{t_0}^{t_1}  \int_{\Omega} \omega(t,x)  f_h'(\phi(x))\, u(t,x)\cdot \psi(x) dxdt  +\int_{\Omega} \omega(t_0,x)  f_h(\phi(x))  dx - \int_{\Omega} \omega(t_1,x)  f_h(\phi(x)) dx =0,
\]
where in the second integral above we used that  $\omega$ is continuous as an $L^1(\Omega)$-valued function of $t\in[0,\infty)$.  If we can show that $u\cdot\psi\in L^{\infty}((0,\infty); L^1(\Omega))$, then taking $h\to 0$ will yield
\[
\int_{t_0}^{t_1}  \int_{\Omega} \psi(x)^T u(t,x) \, \omega(t,x)  dxdt  = \int_{\Omega} \phi(x) \omega(t_1,x)   dx - \int_{\Omega} \phi(x) \omega(t_0,x)  dx
\]
via the dominated convergence theorem.  But this is precisely \eqref{8.1} due to \eqref{2.2} and \eqref{7.18}.

If $B:=B(x_0,\frac 12 {\rm dist}(x_0,\partial \Omega))$, then  $u\cdot\psi\in L^{\infty}((0,\infty); L^1(B))$ because $u$ is bounded on $[0,\infty)\times B$ by \eqref{7.9}.  From \eqref{9.1} we see that there is $C_{x_0}$ such that 
\[
|\psi(x)|\le C_{x_0} \|D\calT(x)\|\le 2 C_{x_0} |\det D\calT(x)|^{\frac 12}
\]
for all $x\in \Omega\setminus B$, so $\psi\in L^2(\Omega)$ by $\int_\Omega \det D\calT(x)dx=|\bbD|$.  So $u\cdot\psi\in L^{\infty}((0,\infty); L^1(\Omega\setminus B))$, which indeed yields $u\cdot\psi\in L^{\infty}((0,\infty); L^1(\Omega))$ and thus finishes the proof.

\end{document}